\definecolor{misson}{rgb}{1,.5,.1}
\definecolor{green}{rgb}{0,.8,.1}
\newcommand{\gd}[1]{\textcolor{blue}{ #1}}
\DeclareMathOperator{\supp}{supp}
\newcommand{\cA}{c_A}
\newcommand{\R}{\mathbb{R}}
\newcommand{\Sobolev}[1]{\mathrm{H}^{#1}} % n-order Sobolev space
\newcommand{\Ldeux}{\mathrm{L}^{2}} % n-order Sobolev space
\newcommand{\Lp}[1]{\mathrm{L}^{#1}}
\newcommand{\tildeprefactor}{\gamma}
\newcommand{\hatprefactor}{\widehat \gamma}
\newcommand{\K}{{K}}
\newcommand{\normA}{A}
\newcommand{\normAk}{A_k}
\newcommand{\normAMp}{A_{\K +1}}
\newcommand{\dualA}{A^{-1}}
\newcommand{\dualAk}{A_k^{-1}}
\newcommand{\dualAMp}{A^{-1}_{\K +1}}
\newcommand{\VoneD}{W}
\newcommand{\eigvec}{\varphi}
\newcommand{\shift}{\varsigma}
\newcommand{\spectrum}{\sigma}
\newcommand{\N}{\mathbb{N}}
\newcommand{\J}{{J}}
\newtheorem{assumption}[theorem]{Assumption}
\newtheorem{remark}[theorem]{Remark}
\title{A posteriori error estimates for \\ Schr\"odinger operators
discretized with \\ linear combinations of atomic orbitals\thanks{This project has received funding from the European Research Council (ERC) under the European Union’s Horizon 2020 research and innovation programme (grant agreement EMC2 No 810367). GD was supported by the French ‘Investissements d’Avenir’ program, project Agence Nationale de la Recherche (ISITE-BFC) (contract ANR-15-IDEX-0003). GD was also supported by the Ecole des Ponts-ParisTech.}}
\author{Mi-Song Dupuy\thanks{Laboratoire Jacques-Louis Lions, Sorbonne Universit\'e, Paris, France
  (\email{mi-song.dupuy@sorbonne-universite.fr}).}
\and Geneviève Dusson\thanks{Université de Franche Comté, CNRS, LmB (UMR 6623), F-25000 Besançon, France.
(\email{genevieve.duson@univ-fcomte.fr}).}
\and Ioanna-Maria Lygatsika\thanks{
% Laboratoire Jacques-Louis Lions and Laboratoire de Chimie Th\'eorique, Sorbonne Universit\'e, Paris, France. \textit{Present address}: 
CEA, DAM, DIF, F-91297 Arpajon, France; and Université Paris-Saclay, Laboratoire Matière en Conditions Extrêmes, 91680 Bruyères-le-Châtel, France. (\email{ioanna-maria.lygatsika@cea.fr}).}}
\begin{document}

\maketitle

\begin{abstract} We establish guaranteed and practically computable \textit{a
posteriori} error bounds for source problems and eigenvalue problems
involving linear Schrödinger operators with atom-centered potentials discretized with linear combinations of atomic orbitals. 
We show that the energy norm of the discretization error can be estimated
by the dual energy norm of the residual, that further decomposes into atomic
contributions, characterizing the error localized on atoms. Moreover, we show
that the practical computation of the dual norms of atomic residuals
involves diagonalizing radial Schrödinger 
operators which can easily be precomputed in practice. 
We provide numerical illustrations of the performance of such \textit{a
posteriori} analysis on several test cases, showing that the error bounds accurately estimate the error, and that the localized error components allow for optimized adaptive basis sets. 
\end{abstract}

% REQUIRED
\begin{keywords}
   A posteriori analysis, Eigenvalue problem, Localized basis set, Schr\"odinger equation
   \end{keywords}
   
   % REQUIRED
   \begin{MSCcodes}
      35Q40, 65N15, 65N25
   \end{MSCcodes}

\tableofcontents

\section{Introduction}

Partial differential equations involving Schr\"odinger operators are routinely solved in many scientific fields, such as chemistry, materials science, and condensed matter physics. 
In practice, such equations are discretized and solved on a chosen finite-dimensional space, which introduces an error with respect to the exact solution of the considered equation.
Since accurate approximations of the solutions are often key for practical applications, it is important to guarantee that the error between the exact and the computed solution is below a given tolerance. This can be provided by means of {\it a posteriori} error estimation. 
Moreover, separating the total error bound into local components,
one can build adaptive basis sets, which can be used to reduce the total computational cost leading to a given accuracy. 

This work is motivated by electronic structure calculations 
where Schr\"odinger operators are ubiquitous. In this field, mainly two types of discretization bases coexist. First localized basis sets called linear combinations of atomic orbitals (LCAO) are often used for the simulation of  molecular systems.
They consist in functions centered on the positions of the nuclei in the molecule, and are often Gaussians or products of Gaussians and polynomials. 
Their main advantage is that the required quadratures to compute the discretization operator and overlap matrix elements can be performed analytically, hence very quickly. 
Moreover, these basis functions have been highly optimized over the years, making it possible to obtain good approximations with very few basis functions, of the order of a dozen per atom in the system.
Second periodic Fourier bases called planewaves are widely used, mostly for periodic systems such as materials systems.
We refer to~\cite{Perlt2021-ya} for a monograph on the subject. 
In electronic structure calculations, finite elements are rarely used apart from a few computer codes, namely DFT-FE~\cite{Motamarri2020-ys} for large-scale simulations and HelFEM for diatomic molecules~\cite{Lehtola_code}. 

On the contrary {\it a posteriori} estimators have been widely developed for finite element discretization methods, both for source problems~\cite{Verfurth2013-xi,Destuynder1999-wq,Ern2015-gu} and eigenvalue problems~\cite{Carstensen2014-jr,Hu2014-cp,Liu2015-xk,Cances2017-oc}, see also references therein. 
Estimators for planewave methods in electronic structure calculations have only been proposed recently, see e.g.~\cite{antoine_apost,Cances2020-sw} for linear eigenvalue Schr\"odinger equations, or~\cite{dusson2017,Cances2022-mg,Bordignon2024-fp} for nonlinear eigenvalue problems; see also~\cite{Dusson2023-fv} for an overview on the subject, and references therein.
The {\it a posteriori} error analysis for LCAOs is even scarcer. 
Even the {\it a priori} analysis seems to be only investigated in~\cite{bachmayr2014error,Scholz_Yserentant_2017} for a specific type of LCAOs.
One major difficulty is that the problem is posed over an unbounded domain, which is fundamentally different from a finite element approach where the basis functions have local support.
To our knowledge, the only \textit{a
posteriori} analysis over unbounded domains for
generic discretizations in the context of electronic structure calculations is proposed in~\cite{maday2003error} for the nonlinear eigenvalue problem called Hartree--Fock, but the error bounds are not fully guaranteed. 

Once {\it a posteriori} error bounds are available, it is often interesting to use them to guide adaptive strategies, refining the parameters of the simulations in a way that minimizes the computational cost leading to a prescribed accuracy.
In the context of finite element simulations, it is common to refine at each step a proportion of mesh elements following the D\"orfler 
marking strategy~\cite{Dorfler1996-vy}.
Such adaptive strategies are e.g. proposed for electronic structure calculations in~\cite{Chen2014-qu,Dai2015-ud}. For planewave discretizations adaptive strategies are presented in~\cite{Liu2022-jo,AAMM-16-636}. Other adaptive strategies exist such as two-grids methods~\cite{Xu*2002-xm,Henning2014,Chakir2016}, see also~\cite{Dusson2023-fv} and references therein. 
To the best of our knowledge, such adaptive basis sets have not been proposed so far for LCAOs.

The aim of this work is therefore to propose such missing {\it a posteriori} analysis as well as adaptive strategies for LCAOs in the context of Schr\"odinger source problems and eigenvalue problems.  
As in~\cite{conforming}, the analysis relies on the estimation of a dual norm of the residual, that is shown to be an upper bound of the eigenfunction error in energy norm. In order to efficiently evaluate this dual norm, we construct a partition of unity that spatially
localizes contributions on different atomic regions, similarly to~\cite{chen2016}. 
This is in spirit close to other error localization strategies such as~\cite{blechta2020localization} for the localization of dual norms of bounded linear functionals.
It is also reminiscent of the multiscale finite element (MsFEM) method~\cite{Babuska_Lipton_2011,Ma_Scheichl_Dodwell_2022} where the domain is partitioned into overlapping regions of the whole domain.
For each small region, a carefully chosen local eigenvalue problem is solved. 
The discretization basis of the full problem is composed of the eigenfunctions of each of the local eigenvalue problems. 
With this choice of basis, it is possible to
estimate the error of the approximate solution on the full domain.
The authors in~\cite{Ma_Scheichl_Dodwell_2022} have been able to characterize the local error bounds.
Their analysis unfortunately does not apply to our context, as they work in a bounded domain, with operators that only have discrete spectra.

The contribution of this work is two-fold. First we develop the theory of
residual-based estimators over unbounded domains deriving fully guaranteed and computable error bounds for source problems and eigenvalue problems. Thanks to a partition of unity we separate the total error into local components, opening the way for adaptive bases in this context. 
To our knowledge this is the first time that rigorous {\it a posteriori} error estimates are proposed suited to Schr\"odinger operators discretized with LCAO bases.
Second we present numerical 
results illustrating the good performance of the error estimates, and we use the local bounds in an adaptive strategy
to generate adaptive atomic orbital basis sets. 
We perform simulations in 1D with Hermite polynomials basis sets and Coulomb-type potentials, using the setting previously introduced in~\cite{GaussOpt}, as well as in 3D on the \ce{H2+} and \ce{LiH^3+} molecules, using the HelFEM finite elements code~\cite{Lehtola_2019_review, Lehtola_code} for the reference calculations.

This article is organized as follows. 
In Section~\ref{sec:setting} we describe the setting of the {\it a posteriori} bounds for the source problem and the eigenvalue problem. 
Section~\ref{sec:results} contains the theoretical results of this article, namely the {\it a posteriori} error bounds for the two considered problems.
In Section~\ref{sec:num_results_adapt} we present numerical results.
The proofs of the error bounds are gathered in Section~\ref{sec:proofs}.
We conclude in Section~\ref{sec:concl}.

\section{Theory}
\label{sec:setting}

In this section, we present the mathematical setting, the source problem and eigenvalue problem of interest. We then present the LCAO variational approximation and a few preliminary definitions that will be useful in the rest of the article.

\subsection{Setting}

Let $\Ldeux(\mathbb{R}^d)$, for $d=1,2,3$, 
be the standard Lebesgue space of square integrable functions on $\mathbb R^d$ endowed with its scalar product 
\[
    \forall \, v,w \in \Ldeux(\mathbb R^d),\ \langle v,w\rangle_{\mathbb R^d} = \int_{\mathbb R^d}v(x)w(x)\dd{x}.
\]
The associated norm $\|\cdot\|_{\mathbb R^d}$ is given by
\(
    \|v\|_{\mathbb R^d} = \langle v,v\rangle_{\mathbb R^d}^{1/2}.
\)
For $\Omega \subset \mathbb{R}^d$, we denote by 
\[
\forall \, v,w \in \Ldeux(\Omega),\ \langle v,w\rangle_{\Omega} = \int_{\Omega}v(x)w(x)\dd{x},
\]
the scalar product on $\Ldeux(\Omega)$. The corresponding norm is denoted by $\|\cdot\|_{\Omega}$.

We consider a potential $V$ satisfying the following assumption.
\begin{assumption}
\label{assump:potential}
    Let $\VoneD:\mathbb R^+\rightarrow\mathbb R$ be such that the potential defined by
\[
    V : \left\lbrace \begin{aligned} \mathbb{R}^d & \to \mathbb{R} \\ x & \mapsto \VoneD(|x|) \end{aligned} \right. 
\]
is in $\Lp{p}(\mathbb{R}^d) + \mathrm{L}^{\infty}_\varepsilon(\mathbb{R}^d)$\footnote{Note that the notation 
$\mathrm{L}^{\infty}_\varepsilon(\mathbb{R}^d)$ is borrowed from~\cite[Definition 1.8]{Lewin_2022} and does not depend on $\varepsilon$.},
with $p = 1$ for $d=1$, and $p=3/2$ for $d=2,3$,
\emph{i.e.} for any $\varepsilon>0$, $V$ can be written as $V = V_p+V_\infty$ with $V_p \in \Lp{p}(\mathbb R^d)$, $V_\infty \in \mathrm{L}^\infty(\mathbb R^d)$ with $\|V_\infty\|_{\mathrm{L}^{\infty}} \leq \varepsilon$.
\end{assumption}

A typical potential of interest that is within our setting is the Coulomb potential $V(x) = \frac{1}{|x|}$ in dimension $d=3$.
Under \gd{\cref{assump:potential}}, the operator $-\frac{1}{2} \Delta + V$ is a lower-bounded self-adjoint operator acting on $\Ldeux(\mathbb R^d)$ with domain $\Sobolev{2}(\mathbb R^d)$, where $\Sobolev{2}(\mathbb R^d)$ is the standard Sobolev space of order 2 on $\mathbb R^d$. Its essential spectrum is given by $\spectrum_{\mathrm{ess}}(-\frac{1}{2} \Delta + V) = [0,\infty)$~\cite[Corollaire 5.35]{Lewin_2022}.
Additionally, we assume that $-\frac{1}{2} \Delta+V$ has eigenvalues below its essential spectrum. 

\begin{assumption}
\label{assump:negative_eigvals}
    The operator $-\frac12\Delta+V$ as an operator acting on $\Ldeux(\mathbb{R}^d)$ with domain $\Sobolev{2}(\mathbb{R}^d)$ has negative eigenvalues $(\varepsilon_k)_{k \in I}$, where $I$ is not empty and countable.
\end{assumption}

Given parameters $R_1,\ldots,R_\K \in \mathbb R^d$ corresponding to atomic positions, for $k=1,\ldots,\K$,  let $V_k(x) = \VoneD(|x-R_k|)$ and
consider the linear Hamiltonian operator of Schrödinger-type defined by
\[
  A = -\frac12\Delta + \sum_{k=1}^\K V_k + \shift,
\]
where $\shift \in\mathbb R$ is a shift factor satisfying the following assumption.
\begin{assumption}
   \label{as:posdefA}
    The shift $\shift\in \R$ is
    such that the operator $A$ is coercive.
\end{assumption}
By Assumption~\ref{assump:potential}, $A$ is self-adjoint on $\Ldeux(\mathbb{R}^d)$ with domain $\Sobolev{2}(\mathbb R^d)$ and its essential spectrum is given by $\spectrum_{\mathrm{ess}}(A) = [\shift,\infty)$.
From Assumption~\ref{assump:negative_eigvals}, we have that $A$ also has eigenvalues below its essential spectrum.
Since $A$ is a coercive self-adjoint operator, the corresponding quadratic form defined as $q_A(v) = \langle v, Av \rangle_{\mathbb R^d}$ for $v \in \Sobolev{2}(\mathbb R^d)$ admits a unique self-adjoint Friedrichs extension denoted by $\widetilde{q_A}$ whose form domain $\mathrm{Q}_A$ satisfies $\Sobolev{2}(\mathbb R^d) \subset \mathrm{Q}_A \subset \Ldeux (\mathbb R^d)$ \cite[Theorem X.23]{Reed_Simon_1975}, and in this case
$\mathrm{Q}_A = \Sobolev{1}(\mathbb R^d)$ \cite[Corollaire 3.19]{Lewin_2022}.
This justifies the definition of the energy norm for $A$ given by  
\[
   \forall \, v \in \Sobolev{1}(\mathbb{R}^d), \quad\|v\|_{\normA} = \sqrt{\widetilde{q_A}(v)}, 
\]
that we will write with a slight abuse of notation, noting that $\widetilde{q_A}(v) = q_A(v)$ for $v \in \Sobolev{2}(\mathbb R^d)$
\[
    \forall \, v \in \Sobolev{1}(\mathbb{R}^d), \quad\|v\|_{\normA} = \langle v,Av\rangle_{\mathbb R^d}^{1/2}. 
\]
Moreover we denote by $\cA$ a positive constant such that
\begin{equation}
\label{eq:cA}
     \forall \,v \in\Sobolev{1}(\mathbb{R}^d), \quad \cA \|v\|_{\normA} \ge \|v\|_{\mathbb R^d}. 
\end{equation}
As $A$ is coercive and self-adjoint on $\Sobolev{2}(\mathbb R^d)$, one can define, 
by spectral calculus,
the symmetric operator $A^{1/2}$ on $\Sobolev{1}(\mathbb R^d)$ 
and $\|v\|_A = \|A^{1/2}v\|_{\mathbb R^d}$ for all $v \in \Sobolev{1}(\mathbb R^d)$ \cite[Théorème 4.24]{Lewin_2022}. 
The associated dual norm is defined by
\begin{align*}
    \forall\, v \in \Ldeux(\mathbb{R}^d), \quad\|v\|_{\dualA} &= \langle v, A^{-1} v \rangle_{\mathbb R^d}^{1/2}.
\end{align*}
We can also define the symmetric square root of $A^{-1}$ denoted by $A^{-1/2}$ and we have that $\|A^{-1/2}v\|^2_{\mathbb R^d} = \langle A^{-1/2}v, A^{-1/2}v \rangle_{\mathbb R^d} = \|v\|_{\dualA}^2$ for any $v \in \Ldeux(\mathbb R^d)$.
Moreover, there holds $A^{-1/2}A^{1/2}v = v$ for any $v \in \Sobolev{1}(\mathbb R^d)$.
Note that the dual norm $\|\cdot \|_{\dualA}$ can be equivalently defined for all $v \in \Ldeux(\mathbb R^d)$ by 
    \begin{equation}
        \label{eq:dualA_equivalent_def}
        \|v\|_{\dualA} = \sup_{w\in \Sobolev{1}(\mathbb R^d)} 
       \frac{\langle v,w\rangle_{\mathbb
       R^d}}{\|w\|_{\normA}}.
    \end{equation}

\subsection{Problem presentation}

In this work, we consider the two following problems on unbounded domains:
\begin{enumerate}
    \item the source problem: for a fixed $f \in \Ldeux(\mathbb{R}^d)$,
\begin{equation}\label{eq:pb_rhs}
    \text{Find }u\in\Sobolev{2}(\mathbb R^d)\text{ solution to }
    Au = f;
\end{equation}
\item the eigenvalue problem
\begin{equation}\label{eq:pb_eig}
    \text{Find }(\lambda_i,\eigvec_i)\in\mathbb R\times \Sobolev{2}(\mathbb R^d)
    \text{ solution to }
        A\eigvec_i = \lambda_i \eigvec_i, \text{ with } \|\eigvec_i\|_{\mathbb R^d} = 1.
\end{equation} 
We denote by $\spectrum_d(A)$ the set of possible eigenvalues $\lambda_i$ solutions to~\cref{eq:pb_eig}, that we order increasingly counting multiplicities.
We also denote by $\spectrum(A) := \spectrum_d(A) \cup \spectrum_{\rm ess}(A)$.
\end{enumerate}

\subsection{Variational approximation}\label{section:variational}

In practice, problems~\cref{eq:pb_rhs} and~\cref{eq:pb_eig} are solved using a 
Galerkin method with the 
atomic orbital (AO) basis set $\{\chi_\mu\}_{1\leq\mu\leq N}$ of size $N$, 
composed of functions centered at atomic positions
\[
    \{\chi_\mu\} = 
    \left\{\xi_{1,1}(x - R_1),\ldots,\xi_{1,n_1}(x - R_1);\ldots
    ;\xi_{\K,1}(x - R_\K),\ldots,\xi_{\K,n_\K}(x - R_\K)\right\},
\]
with $\xi_{i,j}\in \Sobolev{2}(\mathbb R^d)$ called atomic orbital, $n_1,\ldots,n_K\in \N$ such that $\sum_{k=1}^\K n_k = N$.
Typically, in dimension $d=3$, $\xi_{i,j}(x) = \rho(|x|) Y_{\ell m}(\frac{x}{|x|})$, where $\rho$ is a function with exponential decay and $Y_{\ell m}$ is a spherical harmonics (see~\cite[Chapter 6]{Helgaker_Jorgensen_Olsen_2014} for further details).
Defining the Hamiltonian matrix
\(
    A^{\chi} = \left(\langle \chi_\mu,
    A\chi_\nu\rangle_{\mathbb R^d}\right)_{1\leq\mu,\nu\leq N},
\)
and the overlap matrix
\(
    S^{\chi} =
    \left(\langle\chi_\mu,\chi_\nu\rangle_{\mathbb R^d}
    \right)_{1\leq\mu,\nu\leq N},
\)
the discretization of problem~\cref{eq:pb_rhs} in the AO basis set $\chi$ 
then reads as the linear system
\begin{equation}
\label{eq:discrete_evp}
    A^{\chi} a^{\chi} = S^{\chi}f^{\chi},
\end{equation}
with unknown $a^{\chi}\in\mathbb R^N$, where 
$f^{\chi}=(f^{\chi}_\mu)_{1\leq\mu\leq N}$ are the orthogonal projection coefficients of
the source term $f$ on the AO basis set $\chi$ for the natural $\Ldeux$ inner product.
The Galerkin approximation $u_N\in \Sobolev{2}(\mathbb{R}^d)$ of $u$ in the AO basis set $\chi$ can then be
recovered as the LCAO
\[
    \forall x\in\mathbb R^d, \quad u_N(x) = \sum_{\mu=1}^N a^\chi_\mu
    \chi_\mu(x).
\]

As for the discretization of the eigenvalue problem~\cref{eq:pb_eig} in the AO
basis set $\chi$, it reads as the generalized eigenvalue problem: find
$(\lambda_i^\chi,c_i^\chi)\in\mathbb R\times\mathbb R^N$, $i=1,\ldots,N$
such that
\begin{equation}\label{eq:pb_eig_Gal}
    A^\chi c_i^\chi = \lambda_i^\chi  S^\chi c_i^\chi, \quad i=1,\ldots,N.
\end{equation}
Similarly, eigenfunctions can then be recovered for $i=1,\ldots,N$ as
\[
    \forall x\in\mathbb R^d,\quad \eigvec_{iN}(x) = \sum_{\mu=1}^N
    c_{i\mu}^\chi\chi_\mu(x).
\]
For simplicity, we will denote by $\lambda_{iN}$ the eigenvalues in~\cref{eq:pb_eig_Gal} ranked in increasing order counting multiplicities.

\subsection{Residuals}

As in~\cite{Cances2020-sw}, the proposed {\it a posteriori} error estimation relies on the estimation of the dual norm of residuals for the considered equations. We therefore start by defining the residuals for the source and eigenvalue problems as follows: 
\begin{itemize}
    \item for the source problem \cref{eq:pb_rhs}
        \begin{equation*}
            \label{eq:Res_N}   
           \forall u_N \in \Sobolev{2}(\mathbb R^d), \quad  \Res(u_N) = f - Au_N,
        \end{equation*}
    \item for the eigenvalue problem \cref{eq:pb_eig}   
        \begin{equation*}
            \forall (\lambda_{iN},\eigvec_{iN}) \in \R\times \Sobolev{2}(\mathbb R^d), \quad \Res(\lambda_{iN},\eigvec_{iN}) = \lambda_{iN} \eigvec_{iN} - A\eigvec_{iN}.
        \end{equation*}
\end{itemize}

\subsection{Partition of unity}

In general a significant difficulty to obtain guaranteed error bounds is the estimation and/or computation of the dual norm of the residuals, which amounts to solve a source problem over the whole space.
In order to make the estimation of these dual norms possible, the main idea in this work is to take advantage of the radial symmetry of the potentials $V_k$.
To do so, a first idea would be to separate the operator $A$ into $K$ components
\begin{equation}
\label{eq:atomic_decomp}
       A = -\frac{1}{2}\Delta + \sum_{k=1}^\K V_k + \shift = \sum_{k=1}^\K
    \left[ -\frac{1}{2\K} \Delta + V_k + \frac{1}{\K} \shift \right],
\end{equation}
where the operators $-\frac{1}{2\K} \Delta + V_k + \frac{1}{\K} \shift$ are spherically symmetric with respect to $R_k$.
Thus using that $ -\frac{1}{2\K} \Delta + V_k + \frac{1}{\K} \shift$ as a quadratic form defines a norm equivalent to $\Sobolev{1}$, it is possible to use its eigenvalue decomposition, which is simpler due to the spherical symmetry of $V_k$ to estimate the dual norms of the residual.
Unfortunately this approach does not seem viable, 
as already in the case of the Coulomb potential, 
the solution $v_k$ to $-\frac{1}{2\K} \Delta v_k + V_k v_k+ \frac{1}{\K} \shift v_k = f$ for a sufficienty regular $f$ displays a cusp at $R_k$~\cite{Kato_1957} that is not consistent with the cusp at $R_k$ of $u$ solution to $Au = f$. 

We thus proceed slightly differently. 
We introduce a cover $(\Omega_k)_{1 \leq k \leq \K+1}$ of $\mathbb{R}^d$, which is associated to the spherical potentials $(V_k)_{1 \leq k \leq \K}$
with the property
\[
    \bigcup_{k=1}^{\K+1} \Omega_k = \mathbb R^d,\quad\Omega_k =\overline{B}(R_k,r_k), \quad k=1,\ldots,\K,
\]
where $r_k>0$ and $\overline{B}(R_k,r_k)$ is the closed ball centered at $R_k$ of radius $r_k$. Neighboring
subdomains may intersect, i.e. $\Omega_k\cap \Omega_{k'}\neq \emptyset$ when
$k\neq k'$. 

\begin{figure}[t]
    \begin{center}
        \includegraphics[width=0.35\linewidth]{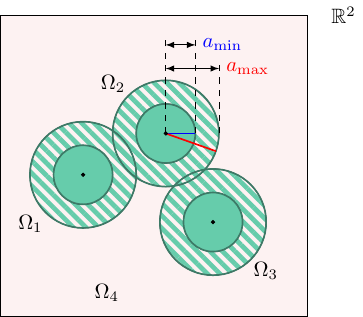}
        \caption{The case of three nuclei in $\mathbb R^2$. Finite cover composed of three balls $\Omega_{1,2,3}$, of radius $r=a_{\max}$ centered at nuclei, and the unbounded set $\Omega_4$ (in light red). The hashed region is the overlap between sets of the cover. For $k=1,2,3$, the partition functions $p_k$ take the value one 
        on the smaller balls of radius $r=a_{\min}$ (rigid green). On $\mathbb{R}^2\setminus(\Omega_1\cup \Omega_2\cup \Omega_3)$, $p_4$ is equal to 1. Otherwise, in the hashed region, $p_k$ is strictly between 0 and 1.}
        \label{fig:omega}
    \end{center}
\end{figure}

We then consider a nonnegative partition of unity subordinate to the finite
cover $(\Omega_k)_{1 \leq k \leq \K+1}$, denoted by $(p_k)_{1\leq k\leq \K+1}$ such that for all $k=1,\ldots,\K+1,$
\begin{equation}\label{eq:poiprop}
    p_k\in \mathrm{C}^\infty(\mathbb R^d), 
    \; \supp(p_k) \subset \Omega_k,
    \; 0 \leq p_k\leq 1,\; \text{and} \;  
    \forall x\in\mathbb R^d, \ \sum_{k=1}^{\K+1} p_k(x) = 1,
\end{equation}
where $\supp$ denotes the support.
An example of such a partition of unity is illustrated in~\cref{fig:omega}, see also Section~\ref{sec:num_results_adapt}.
For the splitting of the {\it a posteriori} estimate, we will need an additional assumption on the partition functions.
\begin{assumption}
\label{assump:partition}
    For all $k=1,\dots,\K+1$, and all $v \in \Sobolev{1}(\mathbb R^d)$, $\sqrt{p_k} v \in \Sobolev{1}(\mathbb R^d)$.    
\end{assumption}
For $k=1,\ldots,\K$, let $A_k$ be the radially symmetric operator
acting on $\Ldeux(\Omega_k)$ with domain $\Sobolev{2}(\Omega_k) \cap \Sobolev{1}_0(\Omega_k)$ defined by
\[
    A_k = -\frac{1}{2} \Delta + V_k + \shift_k,
\]
and let $A_{\K+1}$ be the operator acting on $\Ldeux(\mathbb{R}^d)$ with domain $\Sobolev{2}(\mathbb{R}^d)$ defined by
\[
A_{\K+1} = -\frac{1}{2} \Delta + \shift_{\K+1},
\]
where for $k=1,\ldots, K+1,$ $\shift_k\in\mathbb R$ is a shift factor satisfying
\begin{assumption}
   \label{as:posdef}
    The shifts $\shift_1,\ldots,\shift_{\K+1}$ are 
    such that the operators \linebreak $A_1,\dots,A_{\K+1}$ are coercive.
\end{assumption}
\begin{remark}
    The shifts $\shift_1,\ldots,\shift_{K+1}$ related to the operators $A_1,\ldots,A_{K+1}$ are not {\it a priori related to the shift $\shift$ of the operator $A$, and can be independently chosen.}
\end{remark}

For $1\leq k \leq \K$, the operators $A_k$ acting on $\Ldeux(\Omega_k)$ have a Friedrichs extension with form domain $\Sobolev{1}_0(\Omega_k)$.
We can thus define their corresponding energy norm by
\[
    \forall \, v \in \Sobolev{1}_0(\Omega_k), \quad \|v\|_{\normAk} = \langle v,A_kv\rangle_{\Omega_k}^{1/2},
\]
again with an abuse of notation.
The associated dual norms are given by
\begin{equation*}
    \forall \, v \in \Ldeux(\Omega_k),  \quad\|v\|_{\dualAk} = \langle v,A^{-1}_k v \rangle_{\Omega_k}^{1/2} = \|A_k^{-1/2} v\|_{\Omega_k},
    \quad k=1,\ldots,\K.
\end{equation*}
Likewise, for $A_{\K+1}$, we define the energy and the dual norms associated to $A_{\K+1}$ as
\begin{equation*}
    \begin{aligned}
        \forall \, v \in \Sobolev{1}(\mathbb{R}^d), \quad\|v\|_{A_{\K+1}(\mathbb R^d)} &= \langle v,A_{\K+1} v\rangle_{\mathbb R^d}^{1/2} = \|A_{\K +1}^{1/2}v\|, \\
        \forall\, v \in \Ldeux(\mathbb{R}^d), \quad \|v\|_{\dualAMp} &= \langle v, A^{-1}_{\K+1} v \rangle_{\mathbb R^d}^{1/2} = \|A_{\K+1}^{-1/2}v\|,
    \end{aligned}
\end{equation*} 
where $\langle v,A_{\K+1} v\rangle_{\mathbb R^d}$ is again an abuse of notation for the quadratic form associated to  the self-adjoint Friedrichs extension of $A_{\K+1}$.

\section{Computable \textit{a posteriori} error bounds}
\label{sec:results}

We are now ready to state our main results, namely guaranteed {\it a posteriori} error estimates for the source problem~\cref{eq:pb_rhs} and the eigenvalue problem~\cref{eq:pb_eig}.
In this section we consider the discretization parameter $N$ to be fixed.

\subsection{Guaranteed \textit{a posteriori} error estimates}\label{sec:aee}

In this section, we provide error estimates for the energy norm of the error that are guaranteed but that are not yet fully computable. They indeed involve dual norms of localized residuals.
Let us define the positive constant $C$ by
   \begin{equation}
      \label{eq:C}
      C := 
      1+ 
      \cA^2 \sup_{\mathbb R^d}  
          \left( \sum_{k=1}^{\K+1} -\frac14 \Delta p_k + \frac{|\nabla p_k|^2}
          {8 p_k} + V_k (p_k-1) +  (\shift_k - \shift) p_k \right)^+,
   \end{equation}
   where $\cA$ satisfies~\cref{eq:cA}, and for $a \in \R$, $a^+:=\max
   \{a,0\}$. 
   We state below the error estimation in energy norm for the source problem.
\begin{theorem}[$A$-error estimation for source problem]
   \label{thm:a_posteriori1}
    Let $u$ be the solution to the source 
    problem~\cref{eq:pb_rhs} and $u_N \in \Sobolev{2}(\mathbb{R}^d)$ be an approximate solution. There holds
   \begin{equation*}
       \|u-u_N\|_{\normA} \leq C^{1/2} \left[ \sum_{k=1}^{\K+1} 
       \|\sqrt{p_k} \Res(u_N)\|_{\dualAk}^2 \right]^{1/2}.
   \end{equation*}
 \end{theorem}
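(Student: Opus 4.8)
The plan is to first reduce the energy-norm error to the global dual norm of the residual, then to localize that dual norm over the cover using the partition of unity, and finally to control the sum of local energy norms by the global one through an IMS-type localization identity.

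First I would set $e := u - u_N \in \Sobolev{2}(\mathbb R^d)$ and introduce the residual $r := \mathrm{Res}(u_N) = f - A u_N \in \Ldeux(\mathbb R^d)$ (well-defined since $f\in\Ldeux$ and $A\colon\Sobolev{2}\to\Ldeux$). Since $Au=f$, the error solves $Ae = r$, so that $\|e\|_{\normA}^2 = \langle e, Ae\rangle_{\mathbb R^d} = \langle r, e\rangle_{\mathbb R^d}$. Applying the variational characterization of the dual norm \cref{eq:dualA_equivalent_def} with the admissible test function $w=e\in\Sobolev{1}(\mathbb R^d)$ gives $\|e\|_{\normA}^2 \le \|r\|_{\dualA}\,\|e\|_{\normA}$, hence the residual bound $\|u-u_N\|_{\normA}\le\|r\|_{\dualA}$. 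It then remains to prove $\|r\|_{\dualA}\le C^{1/2}\big(\sum_{k=1}^{\K+1}\|\sqrt{p_k}\,r\|_{\dualAk}^2\big)^{1/2}$.

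For any $w\in\Sobolev{1}(\mathbb R^d)$, the partition of unity property $\sum_k p_k=1$ from \cref{eq:poiprop} yields $\langle r,w\rangle_{\mathbb R^d}=\sum_{k=1}^{\K+1}\langle \sqrt{p_k}\,r,\sqrt{p_k}\,w\rangle$, the inner products being taken over $\Omega_k$ for $k\le\K$ and over $\mathbb R^d$ for $k=\K+1$. Here the square-root splitting $p_k=\sqrt{p_k}\cdot\sqrt{p_k}$ is essential: \cref{assump:partition} together with $\supp(p_k)\subset\Omega_k$ ensures that $\sqrt{p_k}\,w$ lies in the form domain of $A_k$, namely $\Sobolev{1}_0(\Omega_k)$ for $k\le\K$ and $\Sobolev{1}(\mathbb R^d)$ for $k=\K+1$, so that the local dualities between $\|\cdot\|_{\dualAk}$ and $\|\cdot\|_{\normAk}$ apply. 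Combining these with a discrete Cauchy--Schwarz inequality over $k$ gives $\langle r,w\rangle_{\mathbb R^d}\le\big(\sum_k\|\sqrt{p_k}\,r\|_{\dualAk}^2\big)^{1/2}\big(\sum_k\|\sqrt{p_k}\,w\|_{\normAk}^2\big)^{1/2}$.

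The heart of the argument, which I expect to be the main obstacle, is the \emph{localization stability estimate} $\sum_{k=1}^{\K+1}\|\sqrt{p_k}\,w\|_{\normAk}^2\le C\,\|w\|_{\normA}^2$. Expanding each local quadratic form and summing, the potential and shift contributions reorganize, using $\sum_k p_k=1$ and the convention $V_{\K+1}=0$, into $\sum_k V_k(p_k-1)+\sum_k(\shift_k-\shift)p_k$. For the kinetic part I would expand $\nabla(\sqrt{p_k}w)$: upon summing over $k$ the cross terms cancel because $\sum_k\sqrt{p_k}\nabla\sqrt{p_k}=\tfrac12\nabla\big(\sum_k p_k\big)=0$, leaving $\tfrac12\sum_k\|\nabla(\sqrt{p_k}w)\|^2=\tfrac12\|\nabla w\|^2+\int_{\mathbb R^d}w^2\sum_k\frac{|\nabla p_k|^2}{8p_k}$. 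The delicate point is the apparently singular weight $\frac{|\nabla p_k|^2}{p_k}$ near $\{p_k=0\}$; I would absorb it through the pointwise identity $-\tfrac12\sqrt{p_k}\,\Delta\sqrt{p_k}=-\tfrac14\Delta p_k+\frac{|\nabla p_k|^2}{8p_k}$ combined with $\sum_k\Delta p_k=\Delta(1)=0$, so that $\sum_k\frac{|\nabla p_k|^2}{8p_k}=\sum_k\big(-\tfrac14\Delta p_k+\frac{|\nabla p_k|^2}{8p_k}\big)$ is precisely the bounded function appearing in \cref{eq:C}. Collecting kinetic and potential terms gives
\[
\sum_{k=1}^{\K+1}\|\sqrt{p_k}\,w\|_{\normAk}^2=\|w\|_{\normA}^2+\int_{\mathbb R^d}w^2\,D,\qquad D:=\sum_{k=1}^{\K+1}\Big(-\tfrac14\Delta p_k+\tfrac{|\nabla p_k|^2}{8p_k}+V_k(p_k-1)+(\shift_k-\shift)p_k\Big).
\]

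To conclude I would bound $\int_{\mathbb R^d}w^2\,D\le\big(\sup_{\mathbb R^d}(D)^+\big)\|w\|_{\mathbb R^d}^2$ and invoke the coercivity constant \cref{eq:cA}, $\|w\|_{\mathbb R^d}^2\le\cA^2\|w\|_{\normA}^2$, to obtain $\sum_k\|\sqrt{p_k}\,w\|_{\normAk}^2\le\big(1+\cA^2\sup_{\mathbb R^d}(D)^+\big)\|w\|_{\normA}^2=C\,\|w\|_{\normA}^2$ with $C$ as in \cref{eq:C}. Inserting this into the localized Cauchy--Schwarz bound, dividing by $\|w\|_{\normA}$ and taking the supremum over $w\in\Sobolev{1}(\mathbb R^d)$ yields $\|r\|_{\dualA}\le C^{1/2}\big(\sum_k\|\sqrt{p_k}\,r\|_{\dualAk}^2\big)^{1/2}$, which together with the residual bound of the first step proves the claim. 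The two genuine difficulties are the rigorous verification that $\sqrt{p_k}\,w$ belongs to the relevant form domains (resolved by \cref{assump:partition} and the support property) and the control of the singular gradient weight in the kinetic localization (resolved by the square-root partition and the $\Delta p_k$ rewriting, the operators $A_k$ being coercive by \cref{as:posdef}).
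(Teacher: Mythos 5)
Your proof is correct, and every substantive ingredient you use is in the paper; the only difference is how they are composed. The paper's proof of Theorem~\ref{thm:a_posteriori1} never introduces the global dual norm $\|\Res(u_N)\|_{\dualA}$: it writes $\|u-u_N\|_{\normA}^2=\langle u-u_N, A(u-u_N)\rangle_{\mathbb R^d}=\sum_{k}\langle \sqrt{p_k}(u-u_N),\sqrt{p_k}\Res(u_N)\rangle$, applies the local and discrete Cauchy--Schwarz inequalities to this pairing, and invokes the localization estimate $\sum_k\|\sqrt{p_k}v\|_{\normAk}^2\le C\|v\|_{\normA}^2$ (the paper's Lemma~\ref{lemma:lem1}) applied to the error itself. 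You instead first pass to $\|u-u_N\|_{\normA}\le\|\Res(u_N)\|_{\dualA}$ and then prove $\|\Res(u_N)\|_{\dualA}\le C^{1/2}\bigl(\sum_k\|\sqrt{p_k}\Res(u_N)\|_{\dualAk}^2\bigr)^{1/2}$; this second step is, word for word, the paper's Lemma~\ref{lemma:lem2}, which the paper proves by exactly your argument (dual characterization \cref{eq:dualA_equivalent_def}, partition splitting, local duality, Lemma~\ref{lemma:lem1}) but reserves for the eigenvalue problem of Theorem~\ref{thm:a_posteriori2}. So your route is ``Lemma~\ref{lemma:lem2} plus the elementary residual--duality bound,'' while the paper's is a direct localization of the error--residual pairing; the two yield the identical constant and are equivalent chains of inequalities. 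Your derivation of the key localization identity also differs in one pleasant detail: you cancel the kinetic cross terms pointwise via $\sum_k\nabla p_k=0$ and then add back $-\tfrac14\sum_k\Delta p_k=0$ to recover the exact integrand of \cref{eq:C}, whereas the paper integrates each cross term by parts to produce $-\tfrac12\int(\Delta p_k)v^2$ term by term; both are valid and give the same expression, your version avoiding the integration by parts at the cost of the add-zero trick.
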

The error in energy norm is therefore bounded, up to a positive constant, to dual norms of localized residuals $\sqrt{p_k} \Res(u_N)$.
We will show in Section~\ref{subsec:practical_estimates} how to estimate the right-hand side with computable quantities. All the proofs are presented in Section~\ref{sec:proofs}.

\medskip

For simplicity of the presentation, we state the {\it a posteriori} error estimation for the eigenvalue problem in the case of simple eigenvalues only. Note however that clusters of eigenvalues could easily be incorporated to the analysis following~\cite{Cances2020-sw}. 
We start by stating an assumption on the gap between the computed eigenvalue and surrounding eigenvalues.
\begin{assumption}[Continuous-discrete gap conditions]
\label{as:gap_condition}
There exist $\underline{\lambda}_{i+1}\in\R$, and $\overline{\lambda}_{i-1}\in\R$ if $i>1$, which we take as $\overline{\lambda}_{i-1} =\lambda_{(i-1)N}$, such that there holds
\[
  \lambda_{i-1} \le \overline{\lambda}_{i-1} < \lambda_{iN} \;\; \text{ if $i>1$, } 
  \quad 
  \lambda_{iN} < \underline{\lambda}_{i+1} \le \lambda_{i+1}.
\]
\end{assumption}
\begin{remark}
    A possibility to avoid this assumption is to define $\lambda_i$ as the closest exact eigenvalue solution to \cref{eq:pb_eig} to the computed eigenvalue $\lambda_{iN}$ by
\begin{equation*}
     \lambda_i := \mathop{\rm argmin}_{\substack{\lambda \in \spectrum_{{d}}(A)}} |\lambda - \lambda_{iN}|,
\end{equation*}
and to define $\varphi_{i}$ as a corresponding eigenvector. However we prefer not to choose this definition because the variational principle ensuring that $\lambda_i \le \lambda_{iN}$ would be lost.
\end{remark}
We then define the gap constants
\begin{align}
     \label{eq:tildeCi}
     \tildeprefactor_i &:=
     {\rm min} \left( 
     \left(1 - \frac{\lambda_{iN}}{\overline{\lambda}_{i-1}}\right)^2,
     \left(1 - \frac{\lambda_{iN}}{\underline{\lambda}_{i+1}}\right)^2  \right),  
      \\
      \label{eq:tildeCihat}
     \hatprefactor_i &:=
     {\rm min} \left(
     \left(1 - 
     \frac{\lambda_{iN}}{\overline{\lambda}_{i-1}}\right)^2
     \overline{\lambda}_{i-1}, 
     \left(1 - 
     \frac{\lambda_{iN}}{\underline{\lambda}_{i+1}}\right)^2
     \underline{\lambda}_{i+1} 
     \right),
 \end{align}
 the left terms above being discarded for $i=1$.
With this definition, there holds
\begin{align*}
     \tildeprefactor_i & \le \min_{\substack{
     \lambda \in \spectrum(A) \backslash \{\lambda_i \}
     } 
     }
     \left(1 - \frac{\lambda_{iN}}{\lambda}\right)^2, \quad 
     \hatprefactor_i \le  \min_{\substack{
     \lambda \in \spectrum(A) \backslash \{\lambda_i \}
     } }\left(1 - 
     \frac{\lambda_{iN}}{\lambda}\right)^2\lambda.
 \end{align*}
\begin{remark}
\label{rem:gapi}
    In practical calculations, we use for simplicity the computed eigenvalue $\lambda_{(i+1)N}$ in place of $\underline{\lambda}_{i+1}$. Although not anymore fully guaranteed, the resulting error bounds barely change in practice.
    Rigorous bounds on the gap ensuring that \cref{as:gap_condition} could be however be obtained using (coarse) error bounds on the eigenvalues not necessitating gap information (e.g. Bauer--Fikke bound~\cite{Bauer1960-xe}).
\end{remark}

\begin{theorem}[$A$-error estimation for eigenvalue problem]
   \label{thm:a_posteriori2}
    For $1\leq i\leq N$, let 
    $(\lambda_{iN},\eigvec_{iN}) \in \mathbb{R} \times \Sobolev{2}(\mathbb{R}^d)$ be solution to~\cref{eq:pb_eig_Gal}, and 
    $(\lambda_i,\eigvec_i)$ be a solution to equation \cref{eq:pb_eig} 
    satisfying
    \cref{as:gap_condition} and
    $\langle \eigvec_i,\eigvec_{iN}\rangle_{\mathbb R^d}\geq 0$. 
    There holds
    \begin{equation}
        \|\eigvec_i-\eigvec_{iN}\|_{\normA} \leq 
        \left( C \tildeprefactor_i^{-1} r_i^2 +  \lambda_i C^2 \hatprefactor_i^{-2} r_i^4 \right)^{1/2},
        \label{eq:Aestimthm}
    \end{equation}
    where
    \[
        r_i^2 :=  \sum_{k=1}^{\K+1} \|\sqrt{p_k} \Res (\lambda_{iN},\eigvec_{iN})
            \|_{\dualAk}^2.
        \]
         Moreover, 
            \begin{equation}
        0 \leq \lambda_{iN} - \lambda_i \le C \tildeprefactor_i^{-1} r_i^2.
         \label{eq:eigenestimthm}
      \end{equation}
 \end{theorem}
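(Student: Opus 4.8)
My plan is to relate both the eigenvector error and the eigenvalue error to the dual norm $\|\Res(\lambda_{iN},\eigvec_{iN})\|_{\dualA}$ of the residual, and then to invoke the localization inequality $\|v\|_{\dualA}^2 \le C\sum_{k=1}^{\K+1}\|\sqrt{p_k}\,v\|_{\dualAk}^2$ underlying \cref{thm:a_posteriori1} (applied to the $\Ldeux$ function $v=\Res(\lambda_{iN},\eigvec_{iN})$) to convert $\|\Res(\lambda_{iN},\eigvec_{iN})\|_{\dualA}^2$ into $C\,r_i^2$. To this end, let $P_i$ denote the $\Ldeux$-orthogonal projector onto $\eigvec_i$ and write the normalized Galerkin eigenvector as $\eigvec_{iN} = \alpha\eigvec_i + w$, with $\alpha = \langle\eigvec_i,\eigvec_{iN}\rangle_{\mathbb R^d}\ge 0$ and $w = (1-P_i)\eigvec_{iN}$. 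Since $\eigvec_i$ is an eigenvector of $A$, its orthogonal complement is $A$-invariant, so this splitting is simultaneously $\Ldeux$- and $A$-orthogonal; this yields $\alpha^2+\|w\|_{\mathbb R^d}^2 = 1$ and
\[
\|\eigvec_i-\eigvec_{iN}\|_{\normA}^2 = (1-\alpha)^2\lambda_i + \|w\|_{\normA}^2 .
\]

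The core of the argument is to control $w$ through the residual. Applying $A-\lambda_{iN}$ to the splitting and projecting with $1-P_i$ removes the $\eigvec_i$-component and gives $(A-\lambda_{iN})w = -(1-P_i)\Res(\lambda_{iN},\eigvec_{iN})$ on the invariant subspace $\{\eigvec_i\}^\perp$, on which $A-\lambda_{iN}$ is boundedly invertible by the gap assumption. I would then use functional calculus together with the scalar inequalities that define the gap constants, namely $\lambda^2(\lambda-\lambda_{iN})^{-2}\le\tildeprefactor_i^{-1}$ and $\lambda(\lambda-\lambda_{iN})^{-2}\le\hatprefactor_i^{-1}$ for every $\lambda\in\spectrum(A)\setminus\{\lambda_i\}$ (exactly the stated properties of $\tildeprefactor_i,\hatprefactor_i$, valid since coercivity gives $\lambda>0$). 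Writing $w=(A-\lambda_{iN})^{-1}\bigl(-(1-P_i)\Res(\lambda_{iN},\eigvec_{iN})\bigr)$ on $\{\eigvec_i\}^\perp$ and using that $1-P_i$ commutes with $A$ and is an $\dualA$-contraction, these inequalities give
\[
\|w\|_{\normA}^2 \le \tildeprefactor_i^{-1}\|\Res(\lambda_{iN},\eigvec_{iN})\|_{\dualA}^2,
\qquad
\|w\|_{\mathbb R^d}^2 \le \hatprefactor_i^{-1}\|\Res(\lambda_{iN},\eigvec_{iN})\|_{\dualA}^2 .
\]

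To finish the eigenvector bound I would bound the parallel component: from $\alpha\ge0$ and $\alpha^2=1-\|w\|_{\mathbb R^d}^2$ one gets $0\le 1-\alpha\le\|w\|_{\mathbb R^d}^2$, hence $(1-\alpha)^2\lambda_i\le\lambda_i\|w\|_{\mathbb R^d}^4\le\lambda_i\hatprefactor_i^{-2}\|\Res(\lambda_{iN},\eigvec_{iN})\|_{\dualA}^4$. Substituting both estimates into the energy identity and then replacing $\|\Res(\lambda_{iN},\eigvec_{iN})\|_{\dualA}^2$ by $C\,r_i^2$ via the localization inequality yields \cref{eq:Aestimthm}. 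For the eigenvalue estimate, I would use the Rayleigh-quotient identity $\lambda_{iN}=\langle\eigvec_{iN},A\eigvec_{iN}\rangle_{\mathbb R^d}$ (valid since $\|\eigvec_{iN}\|_{\mathbb R^d}=1$), which together with the same $A$-orthogonal splitting gives the exact expression $\lambda_{iN}-\lambda_i=\|w\|_{\normA}^2-\lambda_i\|w\|_{\mathbb R^d}^2$. Since $A$ is coercive, $\lambda_i>0$, so the second term is nonnegative and $\lambda_{iN}-\lambda_i\le\|w\|_{\normA}^2\le\tildeprefactor_i^{-1}\|\Res(\lambda_{iN},\eigvec_{iN})\|_{\dualA}^2\le C\tildeprefactor_i^{-1}r_i^2$, which is \cref{eq:eigenestimthm}; the lower bound $\lambda_{iN}-\lambda_i\ge0$ follows from the Courant--Fischer min--max principle for the conforming Galerkin discretization, \cref{as:gap_condition} ensuring that the $i$-th discrete and exact eigenvalues are correctly matched.

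The routine parts are the orthogonality identities and the verification of the scalar inequalities for $\tildeprefactor_i,\hatprefactor_i$. The main obstacle I anticipate is making the functional-calculus steps fully rigorous in the presence of the essential spectrum $[\shift,\infty)$: one must work with the spectral measure of $A$ restricted to $\{\eigvec_i\}^\perp$ and check that the multiplier bounds hold uniformly over all of $\spectrum(A)\setminus\{\lambda_i\}$, including the continuous part, rather than merely over the isolated eigenvalues. A secondary delicate point is the min--max lower bound, which is sensitive to the indexing of eigenvalues below the essential spectrum and relies on the gap condition to align the discrete and continuous spectra.
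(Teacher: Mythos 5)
Your proof is correct and lands on exactly the bounds \cref{eq:Aestimthm} and \cref{eq:eigenestimthm}, but it is organized differently from the paper's argument. The paper never solves for the orthogonal component: in Proposition~\ref{prop:prop5} it bounds the dual norm of the residual from \emph{below} by a spectral integral of the full error $\eigvec_{iN}-\eigvec_i$, which produces intermediate inequalities carrying quartic corrections, namely
\begin{equation*}
\|\eigvec_i-\eigvec_{iN}\|_{\normA}^2-\tfrac{\lambda_i}{4}\|\eigvec_i-\eigvec_{iN}\|_{\mathbb R^d}^4
\le \tildeprefactor_i^{-1}\|\lambda_{iN}\eigvec_{iN}-A\eigvec_{iN}\|_{\dualA}^2,
\end{equation*}
together with an analogous $\Ldeux$ bound with prefactor $\hatprefactor_i^{-1}$; the quartic terms are then absorbed in a separate step (Lemma~\ref{lemma:prop4}) which is where the sign condition $\langle\eigvec_i,\eigvec_{iN}\rangle_{\mathbb R^d}\ge 0$ enters. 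You instead split $\eigvec_{iN}=\alpha\eigvec_i+w$, solve the resolvent equation $(A-\lambda_{iN})w=(A-\lambda_{iN})(1-P_i)\eigvec_{iN}$ on the invariant subspace $\{\eigvec_i\}^\perp$, bound $\|w\|_{\normA}$ and $\|w\|_{\mathbb R^d}$ directly, and treat the parallel component via $1-\alpha\le\|w\|_{\mathbb R^d}^2$ (which is where you use $\alpha\ge0$). The two routes share all the essential ingredients: the multiplier bounds on $\sigma(A)\setminus\{\lambda_i\}$ defining $\tildeprefactor_i,\hatprefactor_i$, the localization Lemma~\ref{lemma:lem2} converting $\|\cdot\|_{\dualA}^2$ into $C r_i^2$, the normalization identities, and the variational principle for $\lambda_i\le\lambda_{iN}$; indeed your eigenvalue identity $\lambda_{iN}-\lambda_i=\|w\|_{\normA}^2-\lambda_i\|w\|_{\mathbb R^d}^2$ is precisely the paper's identity $\lambda_{iN}-\lambda_i=\|\eigvec_i-\eigvec_{iN}\|_{\normA}^2-\lambda_i\|\eigvec_i-\eigvec_{iN}\|_{\mathbb R^d}^2$ rewritten in your variables. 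What your organization buys: cleaner intermediate estimates (no quartic terms to move across the inequality) and an eigenvalue bound that follows from $\lambda_i>0$ alone, without the paper's $\|\cdot\|_{\mathbb R^d}^2/4\le 1$ device. What the paper's buys: Proposition~\ref{prop:prop5} is valid without the sign condition, which is only invoked at the very end. Finally, the two caveats you flag---that the multiplier bounds must hold over the continuous spectrum as well, and that simplicity of $\lambda_i$ is needed so that $A$ restricted to $\{\eigvec_i\}^\perp$ has spectrum $\sigma(A)\setminus\{\lambda_i\}$---are genuine but affect the paper's proof in exactly the same way, and are covered by the stated properties of the gap constants and the standing simplicity assumption.
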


The error on the eigenvector or the eigenvalue can again be bounded by terms involving localized residuals.

\subsection{Practical estimates}
\label{subsec:practical_estimates}

We now explain how the bounds in Theorem~\ref{thm:a_posteriori1} and Theorem~\ref{thm:a_posteriori2} can be estimated by practically computable quantities. 
The terms for $k=1,\ldots, K$ will be treated separately from the term involving $K+1$.

First, for $1 \leq k \leq \K$, let us focus on the evaluation of $\|\sqrt{p_k} v\|_{\dualAk}$ for $v \in \Ldeux(\mathbb{R}^d)$. For this, we use a spectral decomposition of $A_k$ which is easy to obtain as the spectrum of $A_k$ is discrete -- since $\Omega_k$ is compact -- and since $A_k$ is radially symmetric.
Let $(\varepsilon^{(k)}_j,\psi^{(k)}_j)\in\mathbb R\times \Sobolev{1} 
_0(\Omega_k)$, $j\in\mathbb N$, be the eigenpairs of $A_k$ such that $0<\varepsilon_1^{(k)} \leq \varepsilon_2^{(k)} \leq \dots$ counting multiplicities and 
\begin{equation*}
    A_k \psi^{(k)}_j = \varepsilon^{(k)}_j \psi^{(k)}_j, \quad \text{with} \quad 
    \langle \psi^{(k)}_j,\psi^{(k)}_{j'} \rangle_{\Omega_k} 
    = \delta_{jj'}, \quad j,j'\geq 1.
\end{equation*}
By spectral calculus, for any $v\in \Ldeux(\Omega_k)$ we have
$
   \displaystyle A_k^{-1}v = \sum_{j=1}^\infty\frac{1}{\varepsilon^{(k)}_j}\langle v, \psi^{(k)}_j\rangle_{\Omega_k} \psi^{(k)}_j,
$ thus 
\begin{equation*}
    \|v\|^2_{\dualAk} =
    \sum_{j=1}^\infty\frac{1}{\varepsilon^{(k)}_j}
    \left|\langle v, \psi^{(k)}_j\rangle_{\Omega_k}\right|^2.
\end{equation*}
As this sum is {\it a priori} not computable, we introduce the partial dual norm associated to the partial sum of finite degree $\J_k\in\mathbb N$ defined by
\[
    \mathcal I_k(v) := \mathcal I_k(v;\J_k) = \sum_{j=1}^{\J_k} \frac{1}{\varepsilon^{(k)}_j}
        \left|\langle v, \psi^{(k)}_j\rangle_{\Omega_k}\right|^2.
\]
We estimate the dual norm with respect to $A_k$ using this quantity as follows.

\begin{lemma}[Practical estimate]
    \label{lemma:uplow}
    For any $\J_k \in \mathbb{N}$ and $v\in \Ldeux(\Omega_k)$, there holds 
    \[
        \mathcal I_k(v) \leq \|v\|^2_{\dualAk}
        \leq
        \mathcal I_k(v)
        + \frac{1}{\varepsilon^{(k)}_{\J_k}}\left(\|v\|_{\Omega_k}^2 -
        \sum_{j=1}^{\J_k} 
        \left|\langle v,\psi^{(k)}_j\rangle_{\Omega_k}\right|^2\right).
    \]
\end{lemma}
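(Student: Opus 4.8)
The plan is to read off both inequalities directly from the spectral expansion of $\|\cdot\|_{\dualAk}$ already derived in the excerpt, so almost no new machinery is required. First I would fix $v \in \Ldeux(\Omega_k)$ and abbreviate $c_j := \langle v,\psi^{(k)}_j\rangle_{\Omega_k}$. The key structural facts to invoke are that $\Omega_k$ is bounded (hence $A_k$ has compact resolvent), so the eigenfunctions $(\psi^{(k)}_j)_{j\geq 1}$ form a complete orthonormal basis of $\Ldeux(\Omega_k)$, and that coercivity of $A_k$ (\cref{as:posdef}) forces every eigenvalue $\varepsilon^{(k)}_j > 0$. Completeness yields Parseval's identity $\|v\|_{\Omega_k}^2 = \sum_{j=1}^\infty |c_j|^2$, and the spectral formula in the excerpt reads $\|v\|_{\dualAk}^2 = \sum_{j=1}^\infty \frac{1}{\varepsilon^{(k)}_j}|c_j|^2$, an absolutely convergent series of nonnegative terms.

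Next I would isolate the tail. Subtracting the partial sum $\mathcal I_k(v) = \sum_{j=1}^{\J_k}\frac{1}{\varepsilon^{(k)}_j}|c_j|^2$ gives the exact identity
\[
  \|v\|_{\dualAk}^2 - \mathcal I_k(v) = \sum_{j=\J_k+1}^\infty \frac{1}{\varepsilon^{(k)}_j}|c_j|^2.
\]
The lower bound $\mathcal I_k(v) \leq \|v\|_{\dualAk}^2$ then follows immediately, since the right-hand side is a sum of nonnegative terms (positivity of the $\varepsilon^{(k)}_j$ and of $|c_j|^2$).

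For the upper bound I would exploit the increasing ordering $0 < \varepsilon^{(k)}_1 \leq \varepsilon^{(k)}_2 \leq \cdots$, which gives $\frac{1}{\varepsilon^{(k)}_j} \leq \frac{1}{\varepsilon^{(k)}_{\J_k}}$ for every $j \geq \J_k + 1$. Bounding the tail termwise yields
\[
  \sum_{j=\J_k+1}^\infty \frac{1}{\varepsilon^{(k)}_j}|c_j|^2 \leq \frac{1}{\varepsilon^{(k)}_{\J_k}} \sum_{j=\J_k+1}^\infty |c_j|^2.
\]
Finally, applying Parseval to rewrite the remaining tail as $\sum_{j=\J_k+1}^\infty |c_j|^2 = \|v\|_{\Omega_k}^2 - \sum_{j=1}^{\J_k}|c_j|^2$ and substituting into the exact identity above produces the stated upper bound.

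There is no serious obstacle here: the only points requiring care are ensuring completeness of the eigenbasis so that Parseval is legitimate (justified by compactness of $\Omega_k$ and the resulting discrete spectrum of $A_k$), and confirming strict positivity of the eigenvalues so that the reciprocals and the monotonicity estimate $\frac{1}{\varepsilon^{(k)}_j} \leq \frac{1}{\varepsilon^{(k)}_{\J_k}}$ are well defined; both are guaranteed by the hypotheses already in force. The remainder is elementary manipulation of a nonnegative convergent series.
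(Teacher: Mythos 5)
Your proof is correct and follows essentially the same route as the paper: split the spectral series for $\|v\|_{\dualAk}^2$ into the partial sum $\mathcal I_k(v)$ plus a nonnegative tail (giving the lower bound), bound the tail termwise by $1/\varepsilon^{(k)}_{\J_k}$ using the increasing ordering of the eigenvalues, and rewrite the remaining tail via Parseval as $\|v\|_{\Omega_k}^2 - \sum_{j=1}^{\J_k}|\langle v,\psi^{(k)}_j\rangle_{\Omega_k}|^2$. The only difference is that you explicitly justify completeness of the eigenbasis and strict positivity of the eigenvalues, which the paper takes for granted from its setup; this is a harmless (indeed welcome) addition, not a change of method.
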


\begin{remark}
    In all generality, 
    different parameters $J_k$ can be used for the different values of $k$ from 1 to $K$. In practice, we will use in the numerical simulations a single parameter for all $k=1,\ldots,K.$
\end{remark}

Thus for a given $v \in \Ldeux(\mathbb{R}^d)$, the estimation of $\|\sqrt{p_k} v\|_{\dualAk}$ can be done by first multiplying $\sqrt{p_k}$ and $v$, second by computing the lowest eigenpairs $(\varepsilon^{(k)}_j,\psi^{(k)}_j)_{j}$ of $A_k$ and finally computing $\mathcal I_k(\sqrt{p_k} v)$. 
Moreover Lemma~\ref{lemma:uplow} provides an error control on the approximation of $\|v\|_{\dualAk}$.

We now turn to the estimation of $\|\sqrt{p_{\K+1}} v\|_{\dualAMp}$
 for $v \in \Ldeux(\mathbb{R}^d)$. 
 This requires to compute the solution to a Poisson equation, which can be done
 using for example the Green’s function of the Laplace operator.

Our final estimates presented in Theorem~\ref{thm:practical1} and Theorem~\ref{thm:practical2} are fully guaranteed and practically computable.

\begin{theorem}[Guaranteed and practical estimation for source problem]
   \label{thm:practical1}
    Under the same assumptions as Theorem~\ref{thm:a_posteriori1}, for a fixed
    collection of natural numbers $\bm \J=(\J_k)_{1\leq k\leq \K}$, there holds
    \begin{equation*}
        \|u-u_N\|_{\normA} \leq C^{1/2} \widetilde r(\bm J),
    \end{equation*}
    where
    \begin{align*}
        \widetilde r(\bm J)^2 &:= \sum_{k=1}^\K
       \Bigg[ \mathcal I_k\left(\sqrt{p_k} \Res(u_N)\right) \\ &+
        \frac{1}{\varepsilon^{(k)}_{\J_k}}\Big(\|\sqrt{p_k}\Res(u_N)\|_{\Omega_k}^2 
        - \sum_{j=1}^{\J_k}
        \left|\langle \sqrt{p_k}\Res(u_N),\psi^{(k)}_j
        \rangle_{\Omega_k}\right|^2\Big)
        \Bigg]
        \\
        & + 
        \langle
        \sqrt{p_{\K+1}}\Res(u_N),(-\frac{1}{2} \Delta + \shift_{\K+1})^{-1} \sqrt{p_{\K+1}}\Res(u_N)\rangle_{\Omega_{K+1}}.
   \end{align*}
 \end{theorem}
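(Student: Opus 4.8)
The plan is to obtain the fully computable bound by combining the guaranteed but abstract estimate of Theorem~\ref{thm:a_posteriori1} with the practical estimate of Lemma~\ref{lemma:uplow}, applied term by term to the localized residuals. The starting point is Theorem~\ref{thm:a_posteriori1}, which gives $\|u-u_N\|_{\normA} \leq C^{1/2}\big(\sum_{k=1}^{\K+1}\|\sqrt{p_k}\Res(u_N)\|_{\dualAk}^2\big)^{1/2}$, so it suffices to prove the single inequality $\sum_{k=1}^{\K+1}\|\sqrt{p_k}\Res(u_N)\|_{\dualAk}^2 \le \widetilde r(\bm J)^2$, after which the claim follows from monotonicity of the square root and nonnegativity of $C^{1/2}$.

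First I would treat the atomic terms $k=1,\dots,\K$. Since $\Res(u_N) = f - Au_N \in \Ldeux(\mathbb R^d)$ and $0\le p_k \le 1$ with $\supp(p_k)\subset\Omega_k$, the function $\sqrt{p_k}\,\Res(u_N)$ belongs to $\Ldeux(\Omega_k)$, so Lemma~\ref{lemma:uplow} applies with $v=\sqrt{p_k}\,\Res(u_N)$ and truncation index $\J_k$. Its upper bound yields exactly the bracketed summand of $\widetilde r(\bm J)^2$, namely $\mathcal I_k(\sqrt{p_k}\,\Res(u_N))$ plus the explicitly computable remainder weighted by $1/\varepsilon^{(k)}_{\J_k}$. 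Summing over $k=1,\dots,\K$ reproduces the first two lines of the definition of $\widetilde r(\bm J)^2$.

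Next comes the exterior term $k=\K+1$, which is handled exactly rather than by truncation. Here $A_{\K+1}=-\frac12\Delta+\shift_{\K+1}$, so by the very definition of the dual norm one has $\|\sqrt{p_{\K+1}}\,\Res(u_N)\|_{\dualAMp}^2 = \langle \sqrt{p_{\K+1}}\,\Res(u_N),\,(-\frac12\Delta+\shift_{\K+1})^{-1}\sqrt{p_{\K+1}}\,\Res(u_N)\rangle$, which is precisely the last term of $\widetilde r(\bm J)^2$; the restriction of the inner product to $\Omega_{\K+1}$ in the statement is harmless because $\sqrt{p_{\K+1}}\,\Res(u_N)$ vanishes outside $\Omega_{\K+1}$. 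Unlike the atomic terms this is an equality, reflecting that the exterior contribution is obtained by solving a Poisson-type equation (e.g.\ via the Green's function of $-\frac12\Delta+\shift_{\K+1}$) and is therefore treated as exactly evaluable.

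Combining the $\K$ inequalities with this final equality gives $\sum_{k=1}^{\K+1}\|\sqrt{p_k}\,\Res(u_N)\|_{\dualAk}^2 \le \widetilde r(\bm J)^2$, and substituting into the bound of Theorem~\ref{thm:a_posteriori1} completes the argument. I expect no substantial obstacle here, since the statement is essentially a packaging of two results already established. The only points requiring care are verifying the membership $\sqrt{p_k}\,\Res(u_N)\in\Ldeux(\Omega_k)$ so that Lemma~\ref{lemma:uplow} is legitimately invoked, and emphasizing that the exterior dual norm is an exact identity rather than an estimate, so that the resulting right-hand side is simultaneously a rigorous upper bound and a computable quantity.
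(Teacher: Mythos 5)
Your proposal is correct and follows essentially the same route as the paper's own proof: invoke Theorem~\ref{thm:a_posteriori1}, apply Lemma~\ref{lemma:uplow} to each localized residual $\sqrt{p_k}\,\Res(u_N)$ for $k=1,\dots,\K$, and recognize the $(\K+1)$-th term as the exact dual norm $\|\sqrt{p_{\K+1}}\Res(u_N)\|_{\dualAMp}^2$. Your write-up is in fact slightly more careful than the paper's, since you explicitly verify the $\Ldeux(\Omega_k)$ membership and note that the exterior contribution is an identity rather than an estimate.
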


\begin{theorem}[Guaranteed and practical estimation for the eigenvalue problem]
   \label{thm:practical2}
    Under the same assumptions as Theorem~\ref{thm:a_posteriori2}, for a fixed 
    collection of natural numbers $\bm \J=(\J_k)_{1\leq k\leq \K}$, there holds
    \begin{equation*}
        \|\eigvec_i-\eigvec_{iN}\|_{\normA} \leq 
        \left( C \tildeprefactor_i^{-1} \widetilde r_i(\bm J)^2 +  
        \lambda_{iN} C^2 \hatprefactor_i^{-2} {\widetilde r_i(\bm J)}^4 \right)^{1/2},
    \end{equation*}
    where 
    \begin{align*}
        \widetilde r_i(\bm J)^2 
        &:= \sum_{k=1}^\K 
        \Bigg[
        \mathcal I_k\left(\sqrt{p_k} \Res(\lambda_{iN},\eigvec_{iN})\right) \\ &+
        \frac{1}{\varepsilon^{(k)}_{\J_k}}\Big(\|\sqrt{p_k}\Res(\lambda_{iN},
        \eigvec_{iN})\|_{\Omega_k}^2 
        - \sum_{j=1}^{\J_k}
        \left|\langle \sqrt{p_k}\Res(\lambda_{iN},\eigvec_{iN}),\psi^{(k)}_j
        \rangle_{\Omega_k}\right|^2\Big) \Bigg]\\
        & + 
        \langle \sqrt{p_{\K+1}}\Res(\lambda_{iN},\eigvec_{iN}),
        (-\frac{1}{2} \Delta + \shift_{\K+1})^{-1} \sqrt{p_{\K+1}}\Res(\lambda_{iN},\eigvec_{iN})
        \rangle_{\Omega_{\K+1}},
   \end{align*}
   and
         \[
        0 \leq \lambda_{iN} - \lambda_i \le C \tildeprefactor_i^{-1} \widetilde r_i(\bm J)^2.
      \]
\end{theorem}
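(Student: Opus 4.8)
The plan is to obtain Theorem~\ref{thm:practical2} as a direct consequence of the abstract bound in Theorem~\ref{thm:a_posteriori2}, by replacing the non-computable residual quantity
\[
  r_i^2 = \sum_{k=1}^{\K+1}\|\sqrt{p_k}\Res(\lambda_{iN},\eigvec_{iN})\|_{\dualAk}^2
\]
with the computable $\widetilde r_i(\bm J)^2$, and then the exact eigenvalue $\lambda_i$ with the computed one $\lambda_{iN}$. The central step is to prove the inequality $r_i^2 \le \widetilde r_i(\bm J)^2$. I would split the sum into the $\K$ bounded-domain contributions and the remaining term of index $\K+1$. For each $k=1,\dots,\K$, I would apply Lemma~\ref{lemma:uplow} with $v=\sqrt{p_k}\Res(\lambda_{iN},\eigvec_{iN})\in\Ldeux(\Omega_k)$; its upper bound is exactly the bracketed summand appearing in $\widetilde r_i(\bm J)^2$. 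For the term $k=\K+1$, since $A_{\K+1}=-\tfrac12\Delta+\shift_{\K+1}$, the dual norm $\|\sqrt{p_{\K+1}}\Res(\lambda_{iN},\eigvec_{iN})\|_{\dualAMp}^2$ equals $\langle \sqrt{p_{\K+1}}\Res(\lambda_{iN},\eigvec_{iN}),(-\tfrac12\Delta+\shift_{\K+1})^{-1}\sqrt{p_{\K+1}}\Res(\lambda_{iN},\eigvec_{iN})\rangle$; as $\sqrt{p_{\K+1}}\Res(\lambda_{iN},\eigvec_{iN})$ is supported in $\Omega_{\K+1}$, restricting the outer integration to $\Omega_{\K+1}$ changes nothing, so this term is reproduced \emph{exactly}. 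Summing the $\K$ inequalities and the one equality yields $r_i^2 \le \widetilde r_i(\bm J)^2$.

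It remains to propagate this inequality through the estimate of Theorem~\ref{thm:a_posteriori2}. Its right-hand side, viewed as the function $(x,\lambda)\mapsto (C\tildeprefactor_i^{-1}x + \lambda\, C^2\hatprefactor_i^{-2}x^2)^{1/2}$, is nondecreasing in both arguments for $x,\lambda\ge 0$, all prefactors being nonnegative. Monotonicity in $x$ with $r_i^2\le\widetilde r_i(\bm J)^2$ gives the bound with $r_i^2$ replaced by $\widetilde r_i(\bm J)^2$ (hence $r_i^4$ by $\widetilde r_i(\bm J)^4$, both quantities being nonnegative). To replace $\lambda_i$ by $\lambda_{iN}$, I would invoke the one-sided eigenvalue inequality $0\le\lambda_{iN}-\lambda_i$ from Theorem~\ref{thm:a_posteriori2}, i.e. $\lambda_i\le\lambda_{iN}$, together with monotonicity in $\lambda$, which yields the stated eigenvector estimate. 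The eigenvalue estimate is then immediate: chaining $\lambda_{iN}-\lambda_i\le C\tildeprefactor_i^{-1}r_i^2$ (again from Theorem~\ref{thm:a_posteriori2}) with $r_i^2\le\widetilde r_i(\bm J)^2$ gives $0\le\lambda_{iN}-\lambda_i\le C\tildeprefactor_i^{-1}\widetilde r_i(\bm J)^2$.

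This proof is essentially an assembly of previously established results, so I do not expect a genuine obstacle; the points requiring care are (i) checking that the integration domain $\Omega_{\K+1}$ in the last term of $\widetilde r_i(\bm J)^2$ is consistent with the whole-space dual norm $\|\cdot\|_{\dualAMp}$, which holds thanks to the support of $\sqrt{p_{\K+1}}$, and (ii) justifying the substitution $\lambda_i\mapsto\lambda_{iN}$, which would be illegitimate without the guaranteed lower eigenvalue bound of Theorem~\ref{thm:a_posteriori2} and the coercivity of $A$ ensuring all relevant quantities are nonnegative so that the monotonicity argument applies.
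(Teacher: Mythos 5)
Your proof is correct and follows essentially the same route as the paper's: apply Lemma~\ref{lemma:uplow} to each $w_k=\sqrt{p_k}\Res(\lambda_{iN},\eigvec_{iN})$ for $k=1,\dots,\K$, note the $(\K+1)$-st term is reproduced exactly, and insert the resulting bound $r_i^2\le\widetilde r_i(\bm J)^2$ into Theorem~\ref{thm:a_posteriori2}. In fact you are more explicit than the paper (whose proof is a one-line reference to the proof of Theorem~\ref{thm:practical1}) on the one point that genuinely needs care, namely that replacing $\lambda_i$ by $\lambda_{iN}$ is legitimate because $0<\lambda_i\le\lambda_{iN}$ by coercivity of $A$ and the variational principle, combined with monotonicity of the bound in $\lambda$.
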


\subsection{Adaptive refinement strategy}

Since the discretization basis used is composed of atom-centered functions, for which there is in general no theoretical convergence rate (see \cite{GaussOpt} for a more detailed exposition), we aim at using our {\it a posteriori} error estimator composed of local atomic contributions to determine which atomic basis to enlarge. 
In that regard, for any atom indexed by $1\leq k\leq \K$, let us consider a spectral basis parameter $\J_k\in\mathbb N$ and define the local error indicator by
\begin{align}
    \label{eq:local_error_estimator}
    \eta_k^2 &:= 
    \mathcal I_{k}(\sqrt{p_k} \Res) +
    \frac{1}{\varepsilon^{(k)}_{\J_k+1}}\left(\|\sqrt{p_k}\Res\|_{\Omega_k}^2 
    - \sum_{j=1}^{\J_k}
    \left|\langle \sqrt{p_k}\Res,\psi^{(k)}_j
    \rangle_{\Omega_k}\right|^2\right),
\end{align}
where $\Res$ denotes either $\Res(u_N)$ or $\Res(\lambda_{iN},\eigvec_{iN})$ depending on the solved problem.

A natural refinement strategy consists in finding for which atom $k$ the local error indicator $\eta_k$ is the largest and increasing the number of basis functions centered on the $k^{\rm th}$ atom.
The corresponding algorithm is presented in \cref{algo:basis_adapt}.

\begin{remark}
    The adaptive refinement strategy does not require to compute the estimator related to the $(K+1)$-th component defined as
    \begin{equation*}
        \eta_{K+1}^2 := \langle \sqrt{p_{\K+1}}\Res(\lambda_{iN},\eigvec_{iN}),
        (-\frac{1}{2} \Delta + \shift_{\K+1})^{-1} \sqrt{p_{\K+1}}\Res(\lambda_{iN},\eigvec_{iN})
        \rangle_{\Omega_{\K+1}}.
    \end{equation*}
\end{remark}

\begin{algorithm}[t]
    \begin{algorithmic}[1]
        \REQUIRE{system of $\K$ atoms, atomic basis of size $n_k$ centered on atom $k$, $1\leq k\leq \K$.}
        \ENSURE{refined atomic basis of size $n'_k$, with
        $n'_k\geq n_k$ for all $1\leq k\leq \K$}

        \FOR{$k=1,\ldots,\K$}
            \STATE Evaluate error indicator $\eta_k^2$ defined in equation \cref{eq:local_error_estimator}.
        \ENDFOR
        \STATE Set $k_0=\arg\max_{1\leq k\leq \K}\, \eta_k^2$.
        \RETURN $n'_{k_0} = n_{k_0} + 1$ and $n'_k = n_k$ for all $1\leq
        k\leq \K, k\neq k_0$.
    \end{algorithmic}
    \caption{Adaptive refinement strategy for atom-centered basis sets.}
    \label{algo:basis_adapt}
\end{algorithm}

    \subsection{Computational cost of the error bounds}
    
    For each $k=1,\ldots,\K$ the computation of the corresponding term in the error bound of \cref{thm:practical2}  requires the spectral decomposition of the operator $A_k$, which is radially symmetric. For practical simulations, one can expect that large Gaussian-type orbital basis sets approximate correctly these radially symmetric operators and can be used as a reference basis. Once this is known, only scalar products have to be computed, which can be done very efficiently using numerical quadrature rules. In this context, the computation of the bounds is therefore very cheap. In the present section, we estimate the computational cost of these two steps in terms of floating-point operations.

    \paragraph{Atomic eigenproblems}

    The basis sets used to diagonalize each $A_k$ are expected to be more refined than the discretization basis centered on the same atom used to solve the molecular eigenproblem, in order to obtain a spectral decomposition of sufficient order. If $m_k$ denotes the size of the refined atomic basis per atom, diagonalizing the discretized operator $A_k$ costs $O(m_k^3)$ for each $k$, with upper bound $O(m^3\K)$ where $m=\max_k (m_k)$. 
    By translational invariance of the operators $(A_k)_{1 \leq k \leq K}$, the diagonalization has to be performed only once per atomic species, reducing the prefactor $\K$ to the number of different species in the molecule.  
    The remaining diagonalizations can also be done in parallel over different processors, thus improving the scaling.

    \paragraph{Numerical quadratures}

    All scalar products in the estimator can be evaluated through numerical quadratures, using a fixed number $n_q$ of quadrature points.
    For each $k=1,\ldots,\K$, the evaluation of the estimator $\eta_k$ is of the order $O(n_q m_kJ_kN)$ where $J_k$ is the number of atomic eigenfunctions and $N$ the number of atomic orbitals in the molecular basis. 
    Parallelization over the atomic positions or the quadrature points would improve this scaling.

\section{Numerical results}\label{sec:num_results_adapt}

In this section we present numerical simulations performed to illustrate the error bounds derived in the previous section, as well as the adaptive refinement strategy. 
The numerical results are organized into two parts. 
In a  first part we present experiments on a
1D toy model for which we can easily study the dependency of the error bounds with respect to different simulation parameters. In a second part we present simulations on a 3D model corresponding to standard electronic structure calculations with common basis sets used in the simulations of molecular systems.
All simulations are performed for diatomic systems, that is for $K=2$. For simplicity we use terms related to molecular simulations both in 1D and 3D, although the 1D toy model is not used in general for molecular simulations. In \cref{tab:params}, we summarize the problem parameters. 

In both cases we consider partition of unity functions that are radial and that are, in transition regions, (hashed on \cref{fig:omega})
based on the function defined on the 
interval $[a,b]$
\begin{equation}
\label{eq:partition_example}
    \forall a\leq x\leq b, 
    \quad p(x) = \frac{q(x-a)}{q(x-a) + q(b-x)}, 
    \quad q(x) = \exp(-1/x).
\end{equation}
This function increases on $[a,b]$, equals zero at $x=a$ and one at $x=b$. Moreover, $p$ is 
smooth on the closed interval $[a,b]$. It
can therefore be smoothly extended to a constant function.
A quick check ensures that the partition functions defined from this $p$ satisfy Assumption~\ref{assump:partition}.

\begin{figure}[t]
    \begin{center}
    \footnotesize{
        \begin{tabular}{ |c | l|}
            \hline
             $\shift$ & shift factor for Hamiltonian $A$ \\
             $\shift_1$ & shift factor for Hamiltonian $A_1$ \\
             $\shift_2$ & shift factor for Hamiltonian $A_2$ \\
             $\shift_3$ & shift factor for Hamiltonian $A_3$ \\
             $z_{-R}$    & atomic charge of atom 1 at position $-R$ \\
             $z_{R}$    & atomic charge of atom 2 at position $R$ \\
             $2R$     & interatomic distance \\
             $\ell$   & partition overlap width $0<\ell<2R$ \\
             $n_1$    & AO basis set size on atom 1 \\
             $n_2$    & AO basis set size on atom 2 \\
             $\bm J = (J_1,J_2)$ with $J_1=J_2$   & spectral basis size in \cref{lemma:uplow} \\ \hline
        \end{tabular}
        }
    \end{center}
    \captionof{table}{Summary of parameters for diatomic molecules with atoms at $-R$ and $R$.
    }
    \label{tab:params}
\end{figure}

\subsection{1D toy model}

We first present numerical results for a one-dimensional problem discretized with Hermite--Gaussian basis functions. 
The corresponding code is available in \cite{ape_hbs} for reproducing the simulation results.

\subsubsection{Numerical setting}

We denote by $z_{-R}>0$ and $z_{R}>0$ the nuclear charges of two 
atoms positioned at $-R$ and $R$ on the real line, respectively, for $R>0$.
We consider a soft Coulomb potential
with a softness parameter $\alpha>0$ defined by
\[
    V_{\alpha}(x) = \frac{1}{\sqrt{\alpha^2 + x^2}}.
\]
This potential is often used to mimic a Coulomb singularity for 1D simulations. The value of $\alpha$ is fixed at  $\alpha=0.5$ throughout the simulations. 

The considered Hamiltonian operator on $\R$ is then 
\begin{equation*}
    A = -\frac{1}{2} \Delta - z_{-R} V_\alpha(x+R) - z_{R}V_\alpha(x-R) + \shift, 
\end{equation*}
for some $\shift\in\R$.
We will consider two problems: 

\ \noindent 1. the source problem for
    \begin{align*}
        f = & \Big(-\frac{1}{2} \Delta - z_{-R} V_\alpha(\cdot+R) + \shift_1 \Big)h_1(\cdot+R)  + \Big(-\frac{1}{2} \Delta - z_{R} V_\alpha(\cdot-R) + \shift_2 \Big)h_1(\cdot-R),
    \end{align*}
    where $h_1$ is defined below in~\cref{eq:hermite}.
    
\ \noindent 2.     the eigenvalue problem $A \eigvec_1 = \lambda_1 \eigvec_1$ for the lowest eigenvalue.
\\

As there is no analytical expression for the exact solution to both problems, a reference solution is obtained with a
 finite difference scheme with a uniform grid of $N_g=2001$ points over the bounded 
computational domain $\Omega = [-5R, 5R]$.

The partition of unity is defined as follows.
Given $0<a_{\mathrm{min}}< a_{\mathrm{max}} \leq 2R$, the computational domain $\mathbb R$ is decomposed into $\Omega_1,\Omega_2,\Omega_3$, defined symmetrically for both atoms as
\begin{align*}
    \Omega_1 &= [-R-a_{\mathrm{max}},-R+a_{\mathrm{max}}], \quad \Omega_2 = [R-a_{\mathrm{max}},R+a_{\mathrm{max}}]  \\
    \Omega_3 &= (-\infty, -R-a_\mathrm{min}]\cup[ R+a_\mathrm{min}, \infty),
\end{align*}
with $-R+a_{\rm min} = R- a_{\rm max}$, ensuring that 
$p_1(x)+p_2(x)=1$ for $x \in [-R+a_{\mathrm{min}},R-a_{\mathrm{min}}]$, thus the overlap width has length $\ell=2(R-a_{\mathrm{min}})$.
Such partition of unity based on $p$ in \cref{eq:partition_example} is illustrated in \cref{fig:partition}.

\begin{figure}[t]
    \begin{center}
        \includegraphics[width=\linewidth]{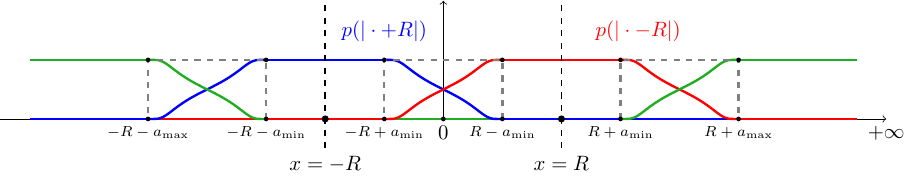}
        \caption{Partition of unity for two atoms placed at $x=\pm R$ in one
        dimension. Partition functions associated to atom
        $(z_{-R},-R)$ (in blue), to atom
        $(z_{R},R)$ (in red) and to the complementary domain $\Omega_3$ (in green). 
        }
        \label{fig:partition}
    \end{center}
\end{figure}

The considered AO basis set is the Hermite--Gaussian basis set \cite{GaussOpt} of functions defined for any $n\in\mathbb N$ as
\begin{equation}
\label{eq:hermite}
        \forall x\in\mathbb R, \quad
    h_n(x) = c_n q_n(x) \exp(-x^2/2),
\end{equation}
where $q_n$ is the Hermite polynomial of degree $n$ and $c_n$ a normalization
constant such that $\int_{\mathbb R}h_n^2 = 1$. 
In practice, we consider $n_1$ (resp. $n_2$) as the basis set size on atom 1 (resp. 2), so that the corresponding basis is $\{h_n(\cdot + R)\}_{0\le n \le n_1-1}$ (resp. $\{h_n(\cdot - R)\}_{0\le n \le n_2-1}$).
Note that $\{h_n\}_{n\in\mathbb
N}$ is an orthonormal basis of $\Ldeux(\mathbb R)$ so that the atomic basis sets are systematically improved when $n_1$ and $n_2$ are increased. However, linear dependencies appear between overlapping basis sets centered on different nuclei, a phenomenon commonly refered to as overcompleteness.
 
\subsubsection{Practical computation of the error estimator} \label{sec:practical_compute_1D}

The error estimators of \cref{thm:practical1} and \cref{thm:practical2} require the
evaluation of several involved quantities, that we obtain in this setting as follows.

\paragraph{Approximate solutions}
   They are obtained by solving either a linear system or a generalized eigenvalue problem on a Hermite--Gaussian basis. We eliminate basis set linear dependencies using pivoted Cholesky decomposition in order to reduce the condition number of the overlap matrix.

\paragraph{Constant $C$} 
    The derivatives of partition of unity functions can be evaluated analytically using forward differentiation (e.g. ForwardDiff \cite{RevelsLubinPapamarkou2016} in Julia). The maximization is then performed on the overlap region of supports only, using a golden-section method.

\paragraph{Constant $c_A$} 
    The optimal constant that we use in the sensitivity analysis below is $c_A = 1/\sqrt{\lambda_1}$ where $\lambda_1$ is the lowest eigenvalue of the reference Hamiltonian. For the error bounds, since $\lambda_1$ is not known, we use instead that the potential
    is lower bounded by a constant \underline{$V$} so that $c_A$ can be taken as $1/\sqrt{\underline{V}+\varsigma}$ provided that $\underline{V}+\varsigma>0$.

\paragraph{Gap constants $\hatprefactor_1$ and $\tildeprefactor_1$}
    As mentioned in \cref{rem:gapi} we take the approximate eigenvalue $\lambda_{2N}$ as $\underline{\lambda}_{2}$. We also compute guaranteed gap constants using the lower bound obtained by Weyl inequality \cite{Weyl1912DasAV} of the form $\lambda_1(-\frac 14 \Delta +V_1 +\frac{\shift}{2}) + \lambda_2(-\frac 14\Delta +V_2 + \frac{\shift}{2})\leq \lambda_2(-\frac 12\Delta +V_1+V_2+\shift)$. Taking this lower bound as $\underline{\lambda}_{2}$ numerically verifies \cref{as:gap_condition} in our setting. Moreover, the bound is computable by diagonalizing the radially symmetric operators on the reference finite difference grid. The lower bound we use in the sensitivity analysis below is $\underline{\lambda}_{2}=\lambda_2$ where $\lambda_2$ is the second lowest eigenvalue of the reference Hamiltonian.
    Of course since the second eigenvalue $\lambda_{2N}$ used in the nonguaranteed error bound is larger than the one used for guaranteed error estimates,
    and 
    since the 
    gap constants $\hatprefactor_1$ and $\tildeprefactor_1$ 
    are increasing with respect to
    $\underline{\lambda}_2$ when $\lambda_{1N}<\underline{\lambda}_2$, we expect the inverse of gap constants to be larger in the guaranteed case than in the nonguaranteed case. This will be illustrated in Figure~\ref{fig:vary_overlap}.
 
\paragraph{Spectral basis} 
    We solve atomic problems to find $\psi^{(k)}_i$ for $k=1,2$. Atomic operators are discretized on the same finite difference grid as the reference Hamiltonian. This resolution can be precomputed once for all internuclear configurations.

\paragraph{Dual atomic operator norms}
    They are approximated following 
     \cref{lemma:uplow} with some parameter $\bm J$.

\paragraph{Dual Laplacian norm}
     A shifted Poisson problem of the form $(-\frac 12\Delta + \shift_3)w=\sqrt{p_3}\Res$ is solved with finite differences.

\subsubsection{Quality of the error estimator}

We compute the practical error estimators given by Theorems
\ref{thm:practical1} and \ref{thm:practical2} respectively for the source and the eigenvalue problems and compare it to the exact error in $A$-norm on \cref{fig:vary_overlap_src} and \cref{fig:vary_overlap}, taking in both cases $R=1$, $z_{-R}=z_{R}=1$, $\shift=4$, $\shift_1=\shift_2=3$, $\shift_3=1$, $\bm J=(17,17)$. First we observe in both cases that the {\it a posteriori} bound is an upper bound of the error. Second it closely captures the behavior of the error for both problems, regardless of the choice of the partition overlap parameter $\ell$,
with tight estimates for larger overlap sizes, as explained in the sensitivity analysis below. Third the guaranteed bound is less tight and higher than the non-guaranteed bound by approximately one order of magnitude.

\begin{figure}[t]
    \centering
    \includegraphics[width=0.48\textwidth]{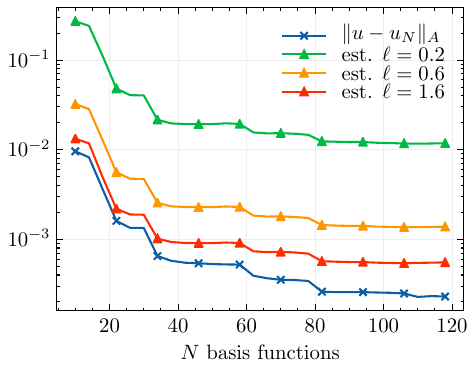}
    \caption{Practical estimates of the approximation error $\|u-u_N\|_A$ for $u$ solution to~\cref{eq:pb_rhs}, using \cref{thm:practical1}, for various overlap width parameter $\ell$ values. Fixed particles in 1D with $R=1$, $z_{-R}=z_{R}=1$, $\shift=4$, $\shift_1=\shift_2=3$, $\shift_3=1$, $\bm
J=(17,17)$. On the $x$-axis is the number
    of AO basis functions equal to $N=n_1+n_2$ with $n_1=n_2=N/2$ per atom.} 
    \label{fig:vary_overlap_src}
\end{figure}

\begin{figure}[t]
    \centering
    \begin{subfigure}[T]{0.48\textwidth}
        \centering
        \includegraphics[width=\textwidth]{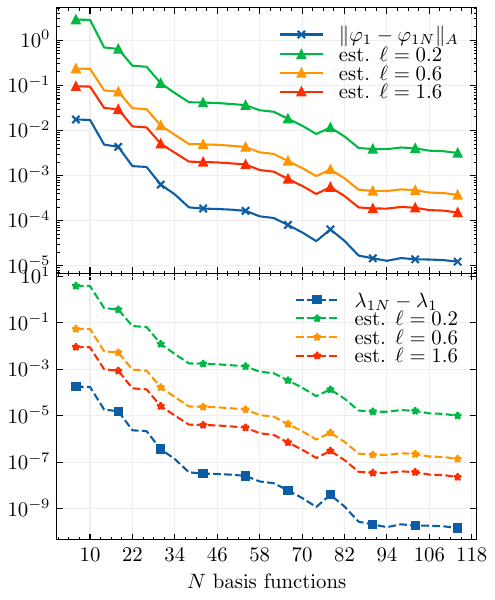}
        \caption{Using non-guaranteed gap constants.}
        \label{fig:non-guaranteed}
    \end{subfigure}\hfill
    \begin{subfigure}[T]{0.48\textwidth}
        \centering
        \includegraphics[width=\textwidth]{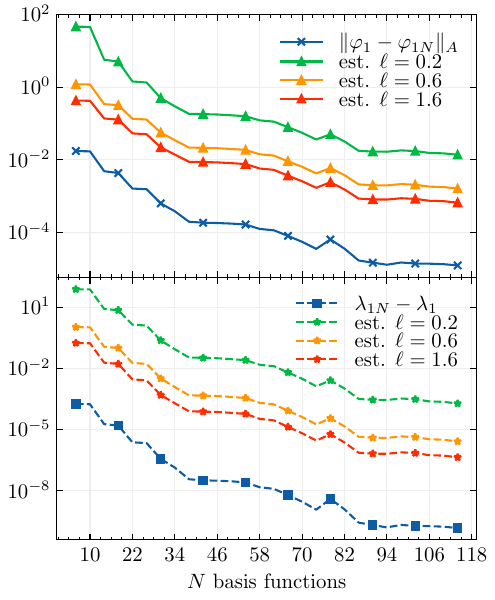}
        \caption{Using guaranteed gap constants.}
        \label{fig:guaranteed}
    \end{subfigure}

    \caption{Practical non-guaranteed (left) and guaranteed (right) estimates of the approximation error $\|\eigvec_1 - \eigvec_{1N}\|_A$ (solid lines) and $\lambda_{1N} - \lambda_1$ (dashed line) for 
        first eigenpair $(\lambda_1,\eigvec_1)$ of~\cref{eq:pb_eig}, using \cref{thm:practical2}, for various overlap width parameter $\ell$ values. Fixed particles in 1D with $R=1$, $z_{-R}=z_{R}=1$, $\shift=4$,
    $\shift_1=\shift_2=3$, $\shift_3=1$, $\bm
J=(17,17)$. On the $x$-axis is the number
    of AO basis functions equal to $N=n_1+n_2$ with $n_1=n_2=N/2$ per atom.} 
    \label{fig:vary_overlap}
\end{figure}

\subsubsection{Numerical parameter sensitivity}\label{sec:sens_1D}

We now study the influence of several numerical parameters on the error bounds for the eigenvalue problem. The results are similar for the source problem therefore we do not display them here. 

\paragraph{Spectral basis size}

We first illustrate in
\cref{fig:spectral_K} the 
influence of the spectral basis size $\bm J$ from \cref{lemma:uplow} on
the dual norm estimation. Results show that the upper and lower bounds
become tight when increasing truncation degree, which is expected as the
spectral approximation systematically improves.

\begin{figure}[t]
    \centering
    \includegraphics[width=0.7\textwidth]{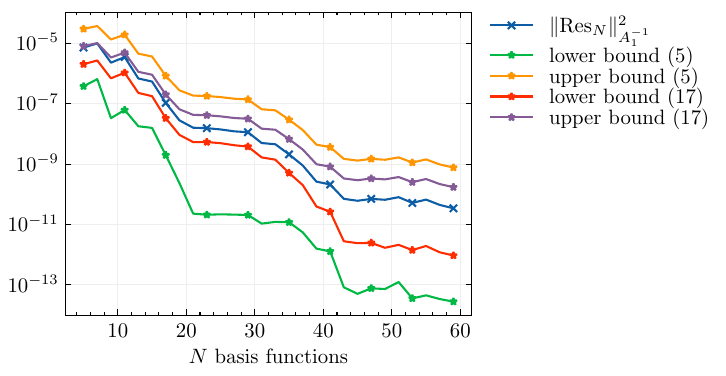}
    \caption{Upper and lower bounds in \cref{lemma:uplow}
    for estimating $\|\Res(\lambda_{1N},\varphi_{1N})\|^2_{A_1^{-1}}$ (in blue), using $\bm J = (5,5)$ or $\bm J = (17,17)$. Fixed $R = 1$, $z_{-R}=z_{R}=1$, $\shift=4$, $\shift_1=\shift_2=3$, $\shift_3=1$, $\ell=0.8$. 
    }
    \label{fig:spectral_K}
\end{figure}

\paragraph{Overlap size}

The influence of the size of the overlap width in the partition of unity given by $\ell$ is shown in
\cref{fig:vary_overlap_src} and \cref{fig:vary_overlap}. As expected the estimator is tighter for
larger overlaps. Indeed the constant $C$ defined in equation \cref{eq:C} for the
practical estimator depends on $\ell$ and on the shift factors. For fixed
shift factors, the constant $C$ increases as the overlap size decreases, 
as shown in \cref{tab:params_sens_C}.

\paragraph{Shift values}

In \cref{tab:params_sens_C}
we also observe that the constant $C$ decreases when the shift values decrease.
However the shift values cannot be decreased too much
without breaking \cref{as:posdef} on positive-definiteness of atomic
operators.

\paragraph{Gap constants}

Finally we study the gap constants $\gamma_1$ and $\hatprefactor_1$ defined in \cref{eq:tildeCi} and \cref{eq:tildeCihat}.
In practice, the magnitude of the final estimator in \cref{thm:a_posteriori2} increases as the values of gap constants decrease. It is thus desirable to choose a shift value $\shift$ that minimises $1/\gamma_1$. Optimal $1/\gamma_1$'s closer to zero can
be obtained by selecting smaller values of the shift factor, as seen in
\cref{tab:params_sens_gap}. At the same time, however, smaller values of
$\shift$ lead to sub-optimal larger values of the constant $\cA$ in
\cref{eq:cA}, thus in larger values of the constant $C$ \cref{eq:C}, the common prefactor of the {\it a posteriori} estimates.
As a conclusion, there exist no optimal value of $\shift$ that minimises both $1/\gamma_1$ and $c_A$ at the same time. 

\begin{table}[t]
    \centering
    \begin{subtable}[t]{0.54\textwidth}
        \begin{center}
        {\footnotesize
        \begin{tabular}{ |l | r | r | r | r |}
            \hline
             \diagbox[width=\dimexpr \textwidth/8+2\tabcolsep\relax,
             height=0.8cm]{$\ell$}{$\shift_a$}  
                      & 5.0      &  3.0         & 2.0         & 1.0 
             \\ \hline
             $0.1$    & 514.14 & 513.32 & 512.91 &  512.50 \\
             $0.3$    &   8.07 &   7.25 &   6.84 &    6.43 \\
             $0.5$    &   2.71 &   1.89 &   1.48 &    1.16 \\
             $0.8$    &   2.16 &   1.34 &   1.00 &    1.00 \\
             $0.9$    &   2.09 &   1.27 &   1.00 &    1.00 
             \\ \hline
        \end{tabular}
        }
        \end{center}
        \caption{Values of the constant $C$ defined in equation \cref{eq:C} for varying overlap 
        width $2\ell$ and atomic shifts $\shift=4$, $\shift_1=\shift_2=\shift_a$.
        }
    \label{tab:params_sens_C}
    \end{subtable}\hfill
    \begin{subtable}[t]{0.38\textwidth}
        \begin{center}    
        \footnotesize{
        \begin{tabular}{ |l | r | r |}
            \hline
             $\shift$  & $1/\gamma_1$ & $c_A$
             \\ \hline
             $3.0$    & 15.59  & 0.83  \\
             $4.0$    & 35.97  & 0.64  \\
             $5.0$    & 64.73  & 0.53  \\
             $6.0$    & 101.90  & 0.47  \\
             $7.0$    & 147.45  & 0.42  \\
             $8.0$    & 201.40  & 0.39
             \\ \hline
        \end{tabular}
        }
    \end{center}
    \caption{Values of  $1/\gamma_1$, with $\gamma_1 = \min_{\substack{
     \lambda \in \spectrum(A) \backslash \{\lambda_1 \}}} \Big(1-\frac{\lambda_{1N}}{\lambda}\Big)^2 $  and of $c_A$ in \cref{eq:cA}, for varying shift factor
    $\shift$. Fixed $\shift_1=\shift_2 = 1$.
    }
    \label{tab:params_sens_gap}
    \end{subtable}
    \caption{Computed parameter values (up to first decimal digits). Fixed common parameters: $R=1$, $z_{-R}=z_{R}=1$, $\shift_3 = 1$.}
\end{table}

\subsubsection{Adaptive basis sets}

In \cref{fig:adapt} we display the performance of the adaptive basis
generation of \cref{algo:basis_adapt} for two sets of parameters, one with $z_{-R}=3, z_R = 1$ and the other with $z_{-R}= z_R = 1$. 

In the first case, in the non-adaptive strategy, $n_1$ and $n_2$ are incremented such that $n_1 = n_2 = \frac{N}{2}$. 
In the adaptive refinement strategy, we observe that the algorithm varies $n_1$ from 5 to 53 and fixes $n_2=5$.
For small basis sizes below $N=20$, the adaptive basis strategy has little effect. However, for a larger basis size such as $N=60$, we gain about one order of magnitude by using our 
refinement strategy, with the same number of basis functions. 
Finally, when $z_{-R}=z_R$, in \cref{fig:adapt_identical} we see the performance of the
refinement strategy. The refinement strategy globally reproduces the behavior of the
optimal symmetric case $n_1=n_2$ for identical charges, which is expected. We have observed that changing the simulation parameters $\ell$ and $J$ barely change the numerical results. This is in accordance with the plots in~\cref{fig:vary_overlap} and~\cref{fig:spectral_K}, where these parameters only shift the convergence curves but do not alterate the behavior.

\begin{figure}[t]
    \centering
    \begin{subfigure}[T]{0.49\textwidth}
        \centering
        \includegraphics[width=.9\textwidth]{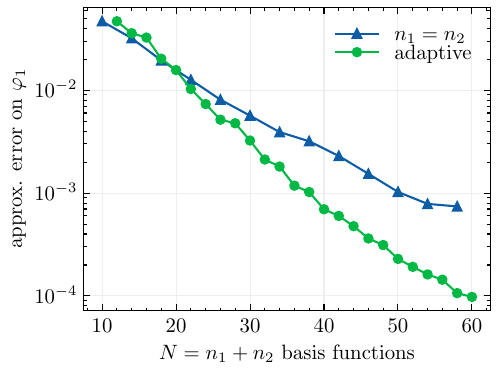}
        \caption{Distinct nuclear charges $z_{-R}=1$, $z_{R}=3$.}
        \label{fig:adapt}
    \end{subfigure}\hfill
    \begin{subfigure}[T]{0.49\textwidth}
        \centering
        \includegraphics[width=.9\textwidth]{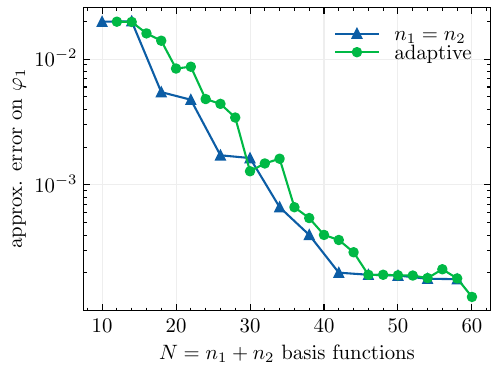}
        \caption{Identical nuclear charges $z_{-R}=z_R=1$.}
        \label{fig:adapt_identical}
    \end{subfigure}

    \caption{Adaptive basis sets on molecules in 1D. Exact operator norm error $\|\eigvec_1 - \eigvec_{1N}\|_A$ for adaptive basis set (in green) versus uniformly refined one with $n_1=n_2$ (in blue). 
    Common fixed parameters: $R= 1$, $\shift=7$, $\shift_1=\shift_2=4$, $\shift_3 = 1$, $\bm J=(17,17)$, $\ell=0.3$.}
\end{figure}

\subsection{3D diatomic molecules}

We now consider a more realistic setting in chemistry, 
addressing Gaussian-type orbital (GTO) basis sets.
The corresponding code, available in \cite{ape_gto}, reproduces all 
the plots. All numbers are given in atomic units.

\subsubsection{Numerical setting}

We still denote by $z_{-R}>0$ and $z_{R}>0$ the nuclear charges of two 
atoms positioned at $-R$ and $R$ on the $x$-axis, respectively, for $R\in \mathbb{R}^3$.
We consider the Coulomb potential, $V_C(x) = |x|^{-1}$ for any $x\in\mathbb R^3$. 
The Hamiltonian operator is then
\begin{equation*}
    A = - \frac 12\Delta - z_{-R} V_C(x + R) - z_R V_C(x- R) + \shift,
\end{equation*}
for a shift $\shift\in\R$.
For this Hamiltonian, we consider the eigenvalue problem \cref{eq:pb_eig} for the lowest eigenvalue $\lambda_1$, discretized using a basis of GTOs centered at nuclear positions. 
We will consider two molecular systems, namely \ce{H_2^+} ($z_{-R} = z_R = 1$) and \ce{LiH^{3+}} ($z_{-R} = 3, z_R = 1$).

In order to assess the discretization error, we use as a reference a finite element resolution in a large enough computational box, obtained by the HelFEM program \cite{Lehtola_2019_review, Lehtola_code}. The spectral bases of the atomic Hamiltonians are also obtained by solving atomic problems using the same code.
The main motivation for using HelFEM is that, unlike other programs, it provides a quadrature rule for accurately computing three-dimensional integrals involving a Coulomb potential singularity. 
HelFEM achieves this by transforming Cartesian coordinates to prolate spheroidal coordinates, enabling to transform the singular Coulomb integral into a smooth one~\cite{Lehtola_2019}.

\subsubsection{Practical computation of the error bounds}

We describe below the computation of the quantities involved in the error estimates.

\paragraph{Eigenvalue problem} 
    The approximate solution is obtained using the PySCF Python module \cite{pyscf}, by solving an unrestricted Hartree--Fock problem on a GTO basis set. 
    The reference solution is obtained using finite elements with HelFEM~\cite{Lehtola_2019} in a radial box of size $[-40,40]$. 

\paragraph{Constant $C$} 
    The partition of unity being defined from radial functions on the atom-centered balls, 
    we evaluate its gradient and Laplacian in spherical coordinates. 
    Since $K=2,$ the constant $C$ can be computed by minimizing a one-dimensional function. 
    In practice, this function is evaluated 
    on a set of 500  uniformly distributed points on the interval $[-2R,2R]$.

\paragraph{Constant $c_A$} As in the 1D case, the constant $c_A$ can be taken as an upper bound of $1/\sqrt{\lambda_1}$, that is we need a lower bound $\underline{\lambda_1}$ for $\lambda_1$, the lowest eigenvalue of the exact Hamiltonian. For this, we use the decomposition~\cref{eq:atomic_decomp} and bound the eigenvalue by the sum of atomic eigenvalues which are explicitely known~\cite[Theorem 10.10]{Teschl2014-pl}. 
For $H_2^+,$ we take $\underline{\lambda_1} = -2+\varsigma$. For \ce{LiH^{3+}} we take $\underline{\lambda_1} = -8+\varsigma$.

\paragraph{Gap constants $\hatprefactor_1$ and $\tildeprefactor_1$}
    As mentioned in \cref{rem:gapi} we take the approximate eigenvalue $\lambda_{2N}$ as $\underline{\lambda}_{2}$.

\paragraph{Spectral basis} 
    We solve the atomic eigenvalue problem on $A_k$ to find $\psi^{(k)}_i$ for $k=1,2$. Atomic operators are discretized on a finite element basis over a bounded computational box defined from the atom using HelFEM. The resolution can be entirely precalculated once for all internuclear configurations. 
    
\paragraph{Dual atomic operator norm}
    For $k=1,2$, we use \cref{lemma:uplow} on a spectral basis of size $\bm J=(J_1,J_2)$, $J_1=J_2$ controlled by maximal angular momentum equal to 6 for \ce{H2+} (that is $J_1 = 36$) and equal to 10 for \ce{LiH^{3+}} (that is $J_1 = 100$). Note that we did not select larger $J_k$ because the size of the spectral basis is limited by the Lebedev quadrature errors in PySCF, that tend to increase with angular momenta. Typically, these errors are of order \num{e-4} for spherical harmonics of angular momenta greater than 7 with a Lebedev quadrature of order 13.

\paragraph{Dual Laplacian norm}

A term that requires a careful treatment is the dual operator norm of the shifted Laplacian operator, used in \cref{thm:practical2}. We compute this by directly evaluating the Green's function that yields a 6-dimensional integral. 
In its most general form, the quantity to evaluate boils down to the following integral, for $a=\sqrt{\shift_{M+1}}$, then transformed by a change of variables $z=x-y$ to
\[
    \iint_{\mathbb R^3\times\mathbb R^3}\frac{f(x)f(y)}{|x-y|}e^{-a|x-y|}\,\dd{x}\dd{y} 
    = \int_{\mathbb R^3} f(y) \left( \int_{\mathbb R^3} \frac{f(z+y)}{|z|}e^{-a|z|}\,\dd{z} \right)\,\dd{y}.
\]
To accurately evaluate the integral, we use the quadrature rule provided by HelFEM to alleviate the Coulomb singularity.

\paragraph{Numerical integration} 

The quadrature rule provided by HelFEM allows to integrate any function over the computational domain $\Omega$ by projecting it on the finite element basis. This allows to directly compute the operator norm $\|\cdot\|_{A}$.
For evaluating $\|\cdot\|_{A_k}$ norm for $k=1,2$ we take advantage of the spherical symmetry, evaluate the radial integral using the grid provided by HelFEM and the integral on the sphere using a Lebedev quadrature rule of order 13.

\subsubsection{Quality of the error estimates}

On \cref{fig:estimator_3d} we plot the the practical error estimator together with the exact error in the case of a molecule \ce{H2+} for many GTO basis sets from the literature. 
We observe that the error estimation closely follows the behavior of the exact error. 
For large basis sets we observe some plateau of the error estimate while the error on the eigenfunctions continues to decrease. There are several possible explanations, in particular the chosen rank $\bm J$ of the correction in \cref{lemma:uplow} which may become too low, or numerical integration parameters.
Further note that the term corresponding to the Laplace inversion (that is in red on \cref{fig:estimator_3d}) decreases sharply for large basis sets, which also corresponds to basis sets with diffuse functions, which is indeed consistent.

\begin{figure}[t]
    \centering 
    \begin{subfigure}[T]{0.5\textwidth}
        \centering
        \includegraphics[width=\textwidth]{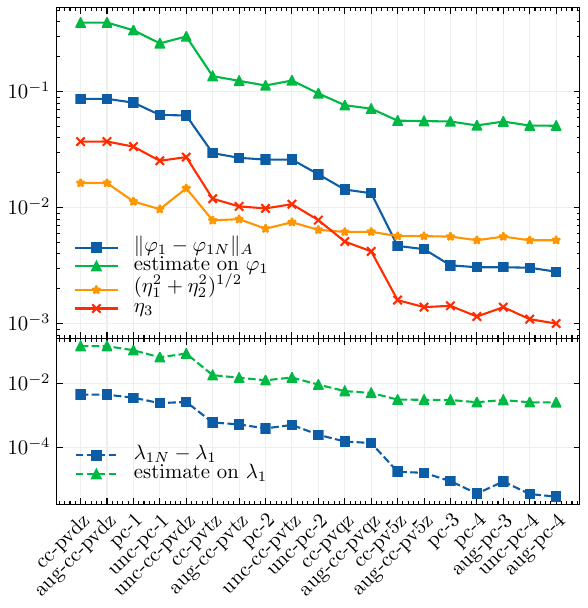}
        \caption{
        Estimate of \cref{thm:practical2} and local contributions, for \ce{H2+} molecule. Fixed $R=0.7$, $z_{-R}=z_R=1$, $\shift=4$. On the $x$-axis is the AO basis.%
        }%
        \label{fig:estimator_3d}
    \end{subfigure}\hfill
    \begin{subfigure}[T]{0.42\textwidth}%
        \centering%
        \includegraphics[width=\textwidth]{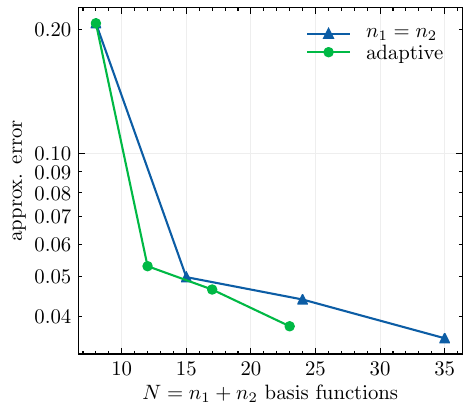}
        \caption{
            Comparison of adaptive basis set 
            versus uniformly refined one with $n_1=n_2$, for \ce{LiH^{3+}} molecule. 
            Fixed $R= 2$, $z_{-R}=3$, $z_R=1$, $\shift=9$.
            }
        \label{fig:adapt_3d}
    \end{subfigure}
    \caption{Approximation errors $\|\eigvec_1-\eigvec_{1N}\|_A$ (and $\lambda_{1N} - \lambda_1$) for diatomic molecules in 3D. Fixed common parameters: $\shift_1=\shift_2 = 
        \shift_3 = 3$, $a_{\min} = 0.1, a_{\max}=0.8$. }
\end{figure}

\subsubsection{Adaptive AO basis}

Finally we test the adaptive basis refinement of \cref{algo:basis_adapt}, using as a pool the cc-pVXZ basis sets, for X$\in\{$D,T,Q,5$\}$, for which increasing the basis size decreases the discretization error. 
Numerical results on \cref{fig:adapt_3d} show that our error indicator detects the atom whose basis is more preferable to refine. 
Indeed, with the same number of basis functions we arrive at better approximations of ground-state energy than if both atoms were refined. 
In particular, using the adaptive algorithm, only the basis of the Lithium atom is refined, while the Hydrogen basis is kept fixed and equal to cc-pVDZ.
Let us note that, precomputing the atomic spectral bases by diagonalization of atomic operators using HelFEM takes about 1 second, while the final computation of atomic estimators $\eta_1^2$ and $\eta_2^2$ only takes approximately 2 seconds for both Lithium and Hydrogen atoms on a desktop computer; 
the estimate $\eta_3^2$ which appears in the global error estimate does not need to be computed for adaptive refinement.
This has to be compared to the computation time necessary to obtain the approximate solution for \ce{LiH^{3+}} using PySCF, which is around 1 second, hence a similar time. Note however that the precomputation with HelFEM has to be done only once per atom type, and does not depend on the nuclei configuration. Moreover, the estimators $\eta_k$ can be easily computed in parallel, hence we expect the overhead computation time to be way smaller for large systems in comparison to the computation of the solution to the eigenvalue problem.
 
\section{Proofs}
\label{sec:proofs}

In this section, we present the proofs of the results presented in~\cref{sec:results}.

\subsection{Preliminaries}

We start by presenting two preliminary lemmas.

\begin{lemma} There holds
   \label{lemma:lem1}
    \begin{equation}
    \label{eq:5.1}
        \forall v\in \Sobolev{1}(\mathbb R^d), 
        \quad \left( \sum_{k=1}^{\K+1} \| \sqrt{p_k} v\|_{\normAk}^2 
        \right)^{1/2} \leq C^{1/2} \|v\|_{\normA},
    \end{equation}
    with $C$ being the positive constant defined in~\cref{eq:C}.
\end{lemma}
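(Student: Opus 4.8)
The plan is to reduce the inequality to an exact identity and a pointwise bound: I will show that
\[
   \sum_{k=1}^{\K+1}\|\sqrt{p_k}\,v\|_{\normAk}^2 = \|v\|_{\normA}^2 + \int_{\mathbb R^d} W\,v^2,
\]
where $W$ is precisely the summand appearing inside the supremum defining $C$ in~\cref{eq:C}, namely $W=\sum_{k=1}^{\K+1}\big[-\tfrac14\Delta p_k+\tfrac{|\nabla p_k|^2}{8p_k}+V_k(p_k-1)+(\shift_k-\shift)p_k\big]$ with the convention $V_{\K+1}:=0$. Once this identity is in hand, the lemma follows by bounding the correction term as $\int W\,v^2\le(\sup_{\mathbb R^d}W)^+\|v\|_{\mathbb R^d}^2\le \cA^2(\sup_{\mathbb R^d}W)^+\|v\|_{\normA}^2$ via~\cref{eq:cA}, giving $\sum_k\|\sqrt{p_k}v\|_{\normAk}^2\le(1+\cA^2(\sup W)^+)\|v\|_{\normA}^2=C\|v\|_{\normA}^2$, and taking square roots.

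To obtain the identity I would first treat each $k$ separately. Since $\supp(p_k)\subset\Omega_k$, the function $\sqrt{p_k}\,v$ is supported in $\Omega_k$ and lies in $\Sobolev{1}_0(\Omega_k)$ by \cref{assump:partition}, so $\|\sqrt{p_k}v\|_{\normAk}^2$ can be written as an integral over all of $\mathbb R^d$. Expanding $\nabla(\sqrt{p_k}\,v)=v\,\nabla\sqrt{p_k}+\sqrt{p_k}\,\nabla v$ and using $|\nabla\sqrt{p_k}|^2=\tfrac{|\nabla p_k|^2}{4p_k}$ together with $\sqrt{p_k}\,\nabla\sqrt{p_k}=\tfrac12\nabla p_k$, I get
\[
   |\nabla(\sqrt{p_k}\,v)|^2 = \frac{|\nabla p_k|^2}{4p_k}\,v^2 + p_k\,|\nabla v|^2 + v\,\nabla p_k\cdot\nabla v .
\]
The kinetic prefactor $\tfrac12$ from $A_k=-\tfrac12\Delta+V_k+\shift_k$ turns the first term into $\tfrac{|\nabla p_k|^2}{8p_k}v^2$, while the cross term $v\,\nabla p_k\cdot\nabla v=\tfrac12\nabla p_k\cdot\nabla(v^2)$ is integrated by parts to produce $-\tfrac14\Delta p_k\,v^2$; this is the origin of the $-\tfrac14\Delta p_k$ contribution to $W$.

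I would then sum over $k=1,\dots,\K+1$ and invoke the partition-of-unity relations $\sum_k p_k\equiv1$ and $\sum_k\nabla p_k\equiv0$. The term $\sum_k p_k|\nabla v|^2$ collapses to $|\nabla v|^2$, rebuilding the kinetic part of $\|v\|_{\normA}^2$; the potential and shift parts combine through $\sum_k V_kp_k-\sum_{k=1}^{\K}V_k=\sum_k V_k(p_k-1)$ and $\sum_k\shift_kp_k-\shift=\sum_k(\shift_k-\shift)p_k$ (using $\shift=\shift\sum_kp_k$), which yields the stated identity. The main obstacle is making this rigorous for a general $v\in\Sobolev{1}(\mathbb R^d)$: the factor $\tfrac{|\nabla p_k|^2}{8p_k}$ is singular where $p_k$ vanishes, and the integration by parts of the cross term requires boundary and decay contributions to vanish. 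I would handle the singularity by working on $\{p_k>0\}$ and noting that $\tfrac{|\nabla p_k|^2}{8p_k}=\tfrac12|\nabla\sqrt{p_k}|^2$ is integrable against $v^2$ exactly because \cref{assump:partition} guarantees $\sqrt{p_k}v\in\Sobolev{1}(\mathbb R^d)$; and I would justify the integration by parts by a density argument approximating $v$ by smooth functions, using that $\nabla p_k$ has compact support for every $k$ (with $p_{\K+1}$ constant outside a bounded set) so that no boundary term survives.
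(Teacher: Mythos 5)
Your proposal is correct and follows essentially the same route as the paper's proof: the same expansion of $\nabla(\sqrt{p_k}\,v)$, the same integration by parts of the cross term $v\,\nabla p_k\cdot\nabla v$, summation over $k$ via the partition of unity to reach the identity $\sum_{k=1}^{\K+1}\|\sqrt{p_k}v\|_{\normAk}^2=\|v\|_{\normA}^2+\int_{\mathbb R^d}Wv^2$, and the final bound through $(\sup_{\mathbb R^d}W)^+$ and the constant $\cA$ from~\cref{eq:cA}. The additional care you take with the singularity of $|\nabla p_k|^2/p_k$ where $p_k$ vanishes and with the density argument justifying the integration by parts only supplies rigor that the paper's proof leaves implicit.
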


\begin{proof} 
For $v \in \Sobolev{1}(\mathbb R^d)$, by Assumption~\ref{assump:partition} on $p_k$, $\sqrt{p_k}v \in \Sobolev{1}_0(\Omega_k)$ for $k=1,\dots,\K$. By definition of the energy norm restricted to the subdomain $\Omega_k$, 
    for $k=1,\ldots,\K$, one has the equality
    \[
       \| \sqrt{p_k} v\|_{\normAk}^2  
        = \frac{1}{2}\langle \nabla(\sqrt{p_k} v) , \nabla(\sqrt{p_k} v)\rangle_{\Omega_k} + 
        \langle \sqrt{p_k} v, (V_k + \shift_k) (\sqrt{p_k} v) 
       \rangle_{\Omega_k}.
    \]
    Moreover, note that 
    \begin{align*}
        \|\sqrt{p_{\K+1}}v\|_{\normA_{\K+1}}^2 &= \frac{1}{2} \langle \nabla(\sqrt{p_{\K+1}}v),\nabla(\sqrt{p_{\K+1}}v) \rangle_{\mathbb R^d} + \langle \sqrt{p_{\K+1}}v, \shift_{\K+1}\sqrt{p_{\K+1}}v \rangle_{\mathbb R^d}.
    \end{align*}
    Expanding the differential operator $\nabla$ gives for $k=1,\dots,\K$
    \begin{align*} 
        \langle \nabla(\sqrt{p_k} v) , \nabla(\sqrt{p_k} v)\rangle_{\Omega_k} &= 
        \int_{\Omega_k} \left|\sqrt{p_k} \nabla v +
        \frac{v}{2\sqrt{p_k}}\nabla p_k\right|^2 \\
        &= 
        \int_{\Omega_k} p_k |\nabla v|^2 
        +  \int_{\Omega_k}  (\nabla p_k) \cdot (v \nabla v)
        + \int_{\Omega_k} \frac{|\nabla p_k|^2}{4 p_k} v^2 \\
        &=  \int_{\mathbb R^d} p_k |\nabla v|^2 
        -\frac12 \int_{\mathbb R^d} (\Delta p_k) v^2
        + \int_{\mathbb R^d} \frac{|\nabla p_k|^2}{4 p_k} v^2,
    \end{align*}
    where in the last line we integrated by parts, used that $p_k$ is supported in $\Omega_k$ and set $\frac{|\nabla p_k|^2}{4 p_k}=0$ outside of $\Omega_k$.
    Similarly, we have that 
    \[
    \langle \nabla(\sqrt{p_{\K+1}}v),\nabla(\sqrt{p_{\K+1}}v) \rangle_{\mathbb R^d}=  \int_{\mathbb R^d} p_{\K+1} |\nabla v|^2 
        -\frac12 \int_{\mathbb R^d} (\Delta p_{\K+1}) v^2
        + \int_{\mathbb R^d} \frac{|\nabla p_{\K+1}|^2}{4 p_{\K+1}} v^2.
    \]
    Thus evoking the property of equation \cref{eq:poiprop} of the partition 
    of unity, we obtain
    \begin{equation} \label{eq:3_lem1}
       \sum_{k=1}^{\K+1} \| \sqrt{p_k} v\|_{\normAk}^2 
       = 
       \frac{1}{2} \|\nabla v\|^2_{\mathbb R^d}
        + \sum_{k=1}^{\K+1}
        \int_{\mathbb R^d} \left(-\frac 14 \Delta p_k 
        + \frac{|\nabla p_k|^2}{8 p_k}
        + (V_k + \shift_k) p_k\right) v^2,
    \end{equation}
    taking $V_{K+1} = 0$.
    The energy norm of $v$ squared admits the expression
    \[
        \|v\|^2_{\normA} = \frac{1}{2}\|\nabla v\|^2_{\mathbb R^d} +
        \sum_{k=1}^{\K+1} \langle v,V_kv\rangle_{\mathbb R^d} + 
        \shift \|v\|^2_{\mathbb R^d}.
    \]
    Adding and subtracting this quantity to equation \cref{eq:3_lem1}, then majorizing 
    the integral over the whole space, one obtains the estimate
    \begin{align*}
       \sum_{k=1}^{\K+1} \| \sqrt{p_k} v\|_{\normAk}^2 
       \!\!\!
        &= \|v\|_{\normA}^2  
            + \sum_{k=1}^{\K+1}
            \int_{\mathbb R^d}\left(-\frac 14\Delta p_k 
            + \frac{|\nabla p_k|^2}{8 p_k}
            + V_k(p_k - 1)  
            + (\shift_k - \shift) p_k \right) v^2 \\
        &\le
        \|v\|_{\normA}^2 + \|v\|_{\mathbb R^d}^2
            \sup_{\mathbb R^d} 
            \left( \sum_{k=1}^{\K+1} - \frac14 \Delta p_k + 
            \frac{|\nabla p_k|^2}{8 p_k} + V_k (p_k-1)  
           \! + \! (\shift_k - \shift) p_k \!\! \right)^{\!\!\!\! +} \!\!\!,
   \end{align*}
   where 
   for $a \in \R$, $a^+:=\max
   \{a,0\}$
   .
   Lastly
   evoking 
   \cref{eq:cA}, we prove the desired result.
\end{proof}

\begin{lemma}\label{lemma:lem2} 
There holds
   \[
       \forall v\in \Ldeux(\mathbb R^d),\quad 
       \|v\|_{\dualA} \le C^{1/2}
       \left( \sum_{k=1}^{\K+1} \| \sqrt{p_k} v\|_{\dualAk}^2 
       \right)^{1/2}.
     \]
\end{lemma}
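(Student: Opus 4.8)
The plan is to derive this statement from \cref{lemma:lem1} by a duality argument, using the variational characterization~\cref{eq:dualA_equivalent_def} of the dual norm. Concretely, I would fix $v \in \Ldeux(\mathbb R^d)$, estimate the pairing $\langle v, w\rangle_{\mathbb R^d}$ for an arbitrary test function $w \in \Sobolev{1}(\mathbb R^d)$, and only at the very end divide by $\|w\|_{\normA}$ and take the supremum over $w$.

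First I would split the pairing across the subdomains by inserting the partition of unity. Since $\sum_{k=1}^{\K+1} p_k = 1$ pointwise and each $p_k \ge 0$, one has
\[
\langle v, w\rangle_{\mathbb R^d} = \sum_{k=1}^{\K+1} \langle p_k v, w\rangle_{\mathbb R^d} = \sum_{k=1}^{\K+1} \langle \sqrt{p_k}\, v,\ \sqrt{p_k}\, w\rangle_{\Omega_k},
\]
where for $k \le \K$ the integral may be restricted to $\Omega_k$ because $\supp(p_k) \subset \Omega_k$, and for $k = \K+1$ the pairing is over $\mathbb R^d$. By \cref{assump:partition} together with the support property, $\sqrt{p_k}\, w \in \Sobolev{1}_0(\Omega_k)$ for $k \le \K$ and $\sqrt{p_{\K+1}}\, w \in \Sobolev{1}(\mathbb R^d)$, while $\sqrt{p_k}\, v \in \Ldeux(\Omega_k)$ in all cases.

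Next I would bound each local pairing by the product of the corresponding dual and energy norms. Using the symmetric square roots $A_k^{1/2}$, $A_k^{-1/2}$ defined by spectral calculus on the coercive self-adjoint operator $A_k$, the Cauchy--Schwarz inequality gives
\[
\langle \sqrt{p_k}\, v,\ \sqrt{p_k}\, w\rangle_{\Omega_k} = \langle A_k^{-1/2}\sqrt{p_k}\, v,\ A_k^{1/2}\sqrt{p_k}\, w\rangle_{\Omega_k} \le \|\sqrt{p_k}\, v\|_{\dualAk}\,\|\sqrt{p_k}\, w\|_{\normAk},
\]
for each $k$, and analogously for $k=\K+1$. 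Summing over $k$ and applying the discrete Cauchy--Schwarz inequality on the resulting $(\K+1)$-term sum yields
\[
\langle v, w\rangle_{\mathbb R^d} \le \Big(\sum_{k=1}^{\K+1}\|\sqrt{p_k}\, v\|_{\dualAk}^2\Big)^{1/2}\Big(\sum_{k=1}^{\K+1}\|\sqrt{p_k}\, w\|_{\normAk}^2\Big)^{1/2}.
\]
I would then invoke \cref{lemma:lem1} to bound the second factor by $C^{1/2}\|w\|_{\normA}$, divide by $\|w\|_{\normA}$, and pass to the supremum over $w \in \Sobolev{1}(\mathbb R^d)$, which by~\cref{eq:dualA_equivalent_def} equals $\|v\|_{\dualA}$, giving exactly the claimed bound.

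The main obstacle I anticipate is the rigorous justification of the local Cauchy--Schwarz step, namely the well-posedness of $A_k^{-1/2}$ acting on $\sqrt{p_k}\, v \in \Ldeux(\Omega_k)$ and the identity $\langle a, b\rangle_{\Omega_k} = \langle A_k^{-1/2} a,\ A_k^{1/2} b\rangle_{\Omega_k}$ for $a \in \Ldeux(\Omega_k)$ and $b \in \Sobolev{1}_0(\Omega_k)$. This relies on \cref{as:posdef}, ensuring $A_k$ is coercive and self-adjoint so that its Friedrichs extension admits a bounded symmetric inverse square root on the form domain $\Sobolev{1}_0(\Omega_k)$, exactly mirroring the construction carried out for $A$ in \cref{sec:setting}. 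A secondary technical point is the membership $\sqrt{p_k}\, w \in \Sobolev{1}_0(\Omega_k)$, which again follows from \cref{assump:partition} and $\supp(p_k) \subset \Omega_k$, just as in the proof of \cref{lemma:lem1}.
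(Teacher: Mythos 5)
Your proposal is correct and follows essentially the same route as the paper's proof: splitting the pairing $\langle v,w\rangle_{\mathbb R^d}$ via the partition of unity, applying Cauchy--Schwarz locally through $A_k^{\pm 1/2}$ and then discretely over $k$, invoking \cref{lemma:lem1}, and concluding via the variational characterization~\cref{eq:dualA_equivalent_def}. The only (immaterial) difference is the order of operations — the paper inserts \cref{lemma:lem1} into the supremum first and then bounds the numerator, whereas you bound the pairing first and apply \cref{lemma:lem1} at the end.
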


\begin{proof}
    Combining~\cref{eq:dualA_equivalent_def} and~\cref{eq:5.1}, we have
   \begin{align*}
       \|v\|_{\dualA} & 
       \le C^{1/2} \sup_{w\in \Sobolev{1}(\mathbb R^d)} 
       \frac{\langle v,w\rangle_{\mathbb R^d}}{\left( 
       \sum_{k=1}^{\K+1} \| \sqrt{p_k} w\|_{\normAk}^2 \right)^{1/2}}.
   \end{align*}
   Moreover, since the support of $p_k$ is $\Omega_k$ for every $k$, we may
    restrict the inner products on the subdomains and use that $A_k^{-1/2}A_k^{1/2}v = v$ for any $v \in \Sobolev{1}_0(\Omega_k)$ for all $k=1,\dots,\K$ and that $A_{\K+1}^{-1/2}A_{\K+1}^{1/2} v = v$ for any $v \in \Sobolev{1}(\mathbb R^d)$.
    This justifies the following calculation 
   \begin{align*}
       \langle v,w\rangle_{\mathbb R^d} &= \sum_{k=1}^{\K+1} \langle  \sqrt{p_k} v,
        \sqrt{p_k} w\rangle_{\mathbb R^d} = \sum_{k=1}^{\K} \langle \sqrt{p_k} v,
       \sqrt{p_k} w\rangle_{\Omega_k} +  \langle \sqrt{p_{\K+1}} v,
       \sqrt{p_{\K+1}} w\rangle_{\mathbb{R}^d}\\
       &= \sum_{k=1}^{\K} \langle A_k^{-1/2} \sqrt{p_k} v,
       A_k^{1/2} \sqrt{p_k} w\rangle_{\Omega_k} +  \langle A_{\K+1}^{-1/2} \sqrt{p_{\K+1}} v,
       A_{\K+1}^{1/2} \sqrt{p_{\K+1}} w\rangle_{\mathbb{R}^d}\\
       & \le \left( \sum_{k=1}^{\K+1} \| \sqrt{p_k} v\|_{\dualAk}^2 \right)
       ^{1/2} \left( \sum_{k=1}^{\K+1} \| \sqrt{p_k} w\|_{\normAk}^2 \right)^{1/2},
   \end{align*}
   hence the result.
\end{proof}     

\subsection{Source problem}

We now turn to the proof of the error estimate for the source problem.

\begin{proof}[Proof of Theorem~\ref{thm:a_posteriori1}]
    Since $u$ satisfies $Au=f$, there holds $\Res(u_N) = A(u-u_N)$. 
    Decomposing $u-u_N$ using the partition of unity and since $u-u_N \in \Sobolev{2}(\mathbb R^d)$, 
   \[
       \norm{u-u_N}_{\normA}^2 = 
       \langle u-u_N,A(u-u_N)\rangle_{\mathbb R^d} = \sum_{k=1}^{\K+1}
       \langle \sqrt{p_k}(u-u_N), \sqrt{p_k}\Res(u_N)\rangle_{\mathbb R^d}.
   \]
    Since the support of $p_k$ is $\Omega_k$ for every $k$, we may
    restrict the inner products on the subdomains and use that $A_k^{-1/2}A_k^{1/2}v = v$ for any $v \in \Sobolev{1}_0(\Omega_k)$ for all $k=1,\dots,\K$ and that $A_{\K+1}^{-1/2}A_{\K+1}^{1/2} v = v$ for any $v \in \Sobolev{1}(\mathbb R^d)$.
    This justifies the following calculation
    \begin{align*}
        \norm{u-u_N}_{\normA}^2 &= 
       \sum_{k=1}^{\K} \langle \sqrt{p_k}(u-u_N), 
       \sqrt{p_k}\Res(u_N)\rangle_{\Omega_k}\\
       & \qquad + \langle \sqrt{p_{\K+1}}(u-u_N), 
       \sqrt{p_{\K+1}}\Res(u_N)\rangle_{\mathbb{R}^d}\\
        &= \sum_{k=1}^{\K} \langle A_k^{1/2}\sqrt{p_k}(u-u_N), 
       A_k^{-1/2} \sqrt{p_k}\Res(u_N)\rangle_{\Omega_k}\\
       & \qquad + \langle A_{\K+1}^{1/2} \sqrt{p_{\K+1}}(u-u_N), 
       A_{\K+1}^{-1/2}\sqrt{p_{\K+1}}\Res(u_N)\rangle_{\mathbb{R}^d}.\\
    \end{align*}
   By the Cauchy--Schwarz inequality on $\Omega_k$ applied to the 
    above right-hand side, one obtains the bound
   \begin{align*}
       \norm{u-u_N}_{\normA}^2 &\leq \sum_{k=1}^{\K}  
       \| \sqrt{p_k}(u-u_N)\|_{\normAk}
       \|\sqrt{p_k}\Res(u_N)\|_{\dualAk} \\
       &\qquad + \| \sqrt{p_{\K+1}}(u-u_N)\|_{\normAMp}
       \|\sqrt{p_{\K+1}}\Res(u_N)\|_{\dualAMp},
   \end{align*}
    that can be further bounded, using the discrete Cauchy--Schwarz inequality,
    as
   \begin{align*}
       \norm{u-u_N}_{\normA}^2 \leq
       \left[
           \sum_{k=1}^{\K+1} \| \sqrt{p_k} (u-u_N) \|_{\normAk}^2
        \right]^{1/2}
        \left[
            \sum_{k=1}^{\K+1} \| \sqrt{p_k}\Res (u_N) \|_{\dualAk}^2
       \right]^{1/2}.
   \end{align*}
   Evoking Lemma~\ref{lemma:lem1} yields the result.
 \end{proof}

\subsection{Eigenvalue problem}

We now present the proof of the error estimates for the eigenvalue problem, starting with the following proposition.

\begin{proposition}[$A$-error and $L^2$-error estimations]\label{prop:prop5}
For $1\leq i\leq N$, let 
    $(\lambda_{iN},\eigvec_{iN}) \in \mathbb{R} \times \Sobolev{2}(\mathbb{R}^d)$ be a solution to~\cref{eq:pb_eig_Gal}, and 
    $(\lambda_i,\eigvec_i)$ be a solution to equation \cref{eq:pb_eig}.
      Under \cref{as:gap_condition}, there holds
    \begin{align}
        \| \eigvec_i - \eigvec_{iN} \|_{\normA}^2  - \frac{\lambda_i}{4}
       \| \eigvec_i - \eigvec_{iN} \|_{\mathbb R^d}^4 
      &\le \tildeprefactor_i^{-1} \| \Res(\lambda_{iN},\eigvec_{iN}) \|_{\dualA}^2, 
      \label{eq:Aestim}\\
             \| \eigvec_i -\eigvec_{iN} \|_{\mathbb R^d}^2  - \frac{1}{4}\| \eigvec_i -
        \eigvec_{iN}\|_{\mathbb R^d}^4 
        &\le \hatprefactor_i^{-1} \|
        \Res(\lambda_{iN},\eigvec_{iN}) \|_{\dualA}^2, \label{eq:L2estim}
    \end{align}
    where $\tildeprefactor_i$ and $\hatprefactor_i$ are the positive constants defined in \cref{eq:tildeCi} and \cref{eq:tildeCihat}.
\end{proposition}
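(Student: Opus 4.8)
The plan is to project the computed eigenvector onto the eigenspace of $\lambda_i$ and to reduce both estimates to a single scalar spectral inequality. Since $A$ is coercive and self-adjoint, let $P_i$ denote the spectral projection onto the eigenspace associated with $\lambda_i$; in the simple case $P_i v = \langle \eigvec_i, v\rangle_{\mathbb R^d}\,\eigvec_i$. I would decompose
\[
    \eigvec_{iN} = \alpha\,\eigvec_i + w, \qquad \alpha := \langle \eigvec_i,\eigvec_{iN}\rangle_{\mathbb R^d}, \quad w := (I-P_i)\eigvec_{iN},
\]
so that $w\perp\eigvec_i$ and, using $\|\eigvec_i\|_{\mathbb R^d}=\|\eigvec_{iN}\|_{\mathbb R^d}=1$, one has $\alpha^2 + \|w\|_{\mathbb R^d}^2 = 1$. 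Because $A$ commutes with $P_i$, the vector $w$ lies in the spectral subspace on which $A$ has spectrum $\spectrum(A)\setminus\{\lambda_i\}$, so that $w$, $Aw$ and $A^{-1}w$ are all orthogonal to $\eigvec_i$ in the relevant pairings.

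Next I would compute the dual norm of the residual. Writing $\Res(\lambda_{iN},\eigvec_{iN}) = (\lambda_{iN}-A)\eigvec_{iN} = \alpha(\lambda_{iN}-\lambda_i)\eigvec_i + (\lambda_{iN}-A)w$ and using $\langle \eigvec_i, A^{-1}\eigvec_i\rangle_{\mathbb R^d}=\lambda_i^{-1}$ together with the orthogonality of the two spectral components gives
\[
    \|\Res(\lambda_{iN},\eigvec_{iN})\|_{\dualA}^2 = \frac{\alpha^2(\lambda_{iN}-\lambda_i)^2}{\lambda_i} + \langle w,\, A^{-1}(\lambda_{iN}-A)^2 w\rangle_{\mathbb R^d} \ge \langle w,\, A^{-1}(\lambda_{iN}-A)^2 w\rangle_{\mathbb R^d},
\]
the discarded first term being nonnegative. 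The key step is then the spectral comparison on $\mathrm{Ran}(I-P_i)$: for every $\lambda\in\spectrum(A)\setminus\{\lambda_i\}$ one has $\frac{(\lambda_{iN}-\lambda)^2}{\lambda} = \lambda\big(1-\tfrac{\lambda_{iN}}{\lambda}\big)^2$, so the bounds stated immediately after \cref{eq:tildeCihat} — valid over the whole spectrum, including the essential part $[\shift,\infty)$ — yield, by spectral calculus, the operator inequalities $A^{-1}(\lambda_{iN}-A)^2 \ge \tildeprefactor_i A$ and $A^{-1}(\lambda_{iN}-A)^2 \ge \hatprefactor_i\, I$ on $\mathrm{Ran}(I-P_i)$. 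Consequently $\|\Res(\lambda_{iN},\eigvec_{iN})\|_{\dualA}^2 \ge \tildeprefactor_i\,\|w\|_{\normA}^2$ and $\|\Res(\lambda_{iN},\eigvec_{iN})\|_{\dualA}^2 \ge \hatprefactor_i\,\|w\|_{\mathbb R^d}^2$.

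Finally I would translate $\|w\|_{\mathbb R^d}^2$ and $\|w\|_{\normA}^2$ back into the error quantities. From $\eigvec_i-\eigvec_{iN} = (1-\alpha)\eigvec_i - w$ and the orthogonality of $w$ to $\eigvec_i$ (also in the $A$-pairing), one gets $\|\eigvec_i-\eigvec_{iN}\|_{\mathbb R^d}^2 = 2(1-\alpha)$ and $\|\eigvec_i-\eigvec_{iN}\|_{\normA}^2 = (1-\alpha)^2\lambda_i + \|w\|_{\normA}^2$, which give the two algebraic identities
\[
    \|w\|_{\mathbb R^d}^2 = 1-\alpha^2 = \|\eigvec_i-\eigvec_{iN}\|_{\mathbb R^d}^2 - \tfrac14\|\eigvec_i-\eigvec_{iN}\|_{\mathbb R^d}^4, \quad \|w\|_{\normA}^2 = \|\eigvec_i-\eigvec_{iN}\|_{\normA}^2 - \tfrac{\lambda_i}{4}\|\eigvec_i-\eigvec_{iN}\|_{\mathbb R^d}^4 .
\]
Substituting these into the two spectral lower bounds yields exactly \cref{eq:Aestim} and \cref{eq:L2estim}. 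I expect the main obstacle to be the rigorous handling of the continuous spectrum: one cannot rely on a plain eigenfunction expansion and must instead phrase the comparison through the projection-valued spectral measure of $A$, verifying that the scalar inequalities $(1-\lambda_{iN}/\lambda)^2\ge\tildeprefactor_i$ and $\lambda(1-\lambda_{iN}/\lambda)^2\ge\hatprefactor_i$ genuinely hold across the essential spectrum — which is precisely what \cref{as:gap_condition} and the definitions \cref{eq:tildeCi}--\cref{eq:tildeCihat} guarantee.
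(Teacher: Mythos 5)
Your proof is correct and follows essentially the same route as the paper's: both arguments split off and discard the (nonnegative) component of the dual-norm residual along $\eigvec_i$, apply the scalar gap inequalities via spectral calculus on the spectral complement of $\eigvec_i$, and convert $\|w\|_{\mathbb R^d}^2$ and $\|w\|_{\normA}^2$ back into the error quantities using the normalization identity $\langle \eigvec_i-\eigvec_{iN},\eigvec_i\rangle_{\mathbb R^d}=\tfrac12\|\eigvec_i-\eigvec_{iN}\|_{\mathbb R^d}^2$. The only difference is notational: the paper phrases the key comparison through the spectral measure $d\mu_{A,\eigvec_{iN}-\eigvec_i}$ restricted to $\spectrum(A)\setminus\{\lambda_i\}$, whereas you state it as operator inequalities on $\mathrm{Ran}(I-P_i)$, which is the same argument.
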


\begin{proof} 
   By definition of the residual, one has the equality
    \[
       \|  \Res(\lambda_{iN},\eigvec_{iN})  \|_{\dualA}^2 
        = \langle A^{-1/2}(A- \lambda_{iN})\eigvec_{iN} , A^{-1/2} (A- \lambda_{iN})\eigvec_{iN}
       \rangle_{\mathbb R^d}.
   \]
    First, we write
    \begin{align*}
        \|  \Res(\lambda_{iN},\eigvec_{iN})  \|_{\dualA}^2 = &
        | \langle \eigvec_i , A^{-1/2} (A- \lambda_{iN})\eigvec_{iN}
       \rangle_{\mathbb R^d}|^2
       + \| \Pi^\perp_{\eigvec_i} A^{-1/2} (A- \lambda_{iN})\eigvec_{iN}
       \|_{\mathbb R^d}^2, 
    \end{align*}
    where $\Pi^\perp_{\eigvec_i}$ is the orthogonal projector onto the orthogonal space of $\eigvec_i$ with respect to the $\Ldeux(\R^d)$ scalar product.
    Noting that $\Pi^\perp_{\eigvec_i}$
    and $A^{-1/2} (A- \lambda_{iN})$ commute,
     \begin{align*}
        \|  \Res(\lambda_{iN},\eigvec_{iN})  \|_{\dualA}^2 = \; &
        | \langle \eigvec_i , A^{-1/2} (A- \lambda_{iN})\eigvec_{iN}
       \rangle_{\mathbb R^d}|^2 \\
       & \;  + \| \Pi^\perp_{\eigvec_i} A^{-1/2} (A- \lambda_{iN})(\eigvec_{iN} - \eigvec_i)
       \|_{\mathbb R^d}^2 \\
       \ge  \; & \| \Pi^\perp_{\eigvec_i} A^{-1/2} (A- \lambda_{iN})(\eigvec_{iN} - \eigvec_i)
       \|_{\mathbb R^d}^2.
    \end{align*}
    By spectral calculus
    denoting by $d\mu_{A,\eigvec_{iN}-\eigvec_i}$ the spectral measure associated to $A$ and the function $\eigvec_{iN}-\eigvec_i$ (see~\cite[Section 4.3]{Lewin_2022}), we obtain
    \begin{align*}
        \|  \Res(\lambda_{iN},\eigvec_{iN})  \|_{\dualA}^2
        &\ge  
        \int_{\spectrum(A)\backslash \{\lambda_i\}} \frac{(s-\lambda_{iN})^2}{s} d\mu_{A,\eigvec_{iN}-\eigvec_i}(s).
         \nonumber 
    \end{align*}
Now using~\cref{eq:tildeCi} there holds
\begin{equation}
      \|  \Res(\lambda_{iN},\eigvec_{iN})  \|_{\dualA}^2 \ge 
     \tildeprefactor_i 
         \int_{\spectrum(A)\backslash \{\lambda_i\}}  s \;d\mu_{A,\eigvec_{iN} - \eigvec_i}(s) ,
        \label{eq:eig7}
\end{equation}
and 
\begin{equation}
         \|  \Res(\lambda_{iN},\eigvec_{iN})  \|_{\dualA}^2 \ge 
     \hatprefactor_i 
        \int_{\spectrum(A)\backslash \{\lambda_i\}} \;d\mu_{A,\eigvec_{iN} - \eigvec_i}(s) .
        \label{eq:eig7hat}
\end{equation}
   Moreover, employing the scalings $\|\eigvec_i\|_{\mathbb R^d}=\|\eigvec_{iN}\|_{\mathbb R^d}=1$,
    we write 
    \begin{equation}\label{eq:eig5}
        \langle \eigvec_i-\eigvec_{iN},\eigvec_i\rangle_{\mathbb R^d} = 
        \frac{\|\eigvec_i\|_{\mathbb R^d}+\|\eigvec_{iN}\|_{\mathbb R^d}}{2} - 
        \langle \eigvec_{iN},\eigvec_i\rangle_{\mathbb R^d} = 
        \frac 12\|\eigvec_i - \eigvec_{iN}\|_{\mathbb R^d}^2.
    \end{equation}
    Hence 
    \begin{align}
             \int_{\spectrum(A)\backslash \{\lambda_i\}}  s \;d\mu_{A,\eigvec_{iN} - \eigvec_i}(s)
            &=
            \| \eigvec_{iN} - \eigvec_i\|_{A}^2 
            - \lambda_i \left|\langle \eigvec_{iN} - \eigvec_i,
            \eigvec_i\rangle_{\mathbb R^d}\right|^2 \nonumber
            \\
            &
            =  
            \| \eigvec_{iN} - \eigvec_i\|_{A}^2 - \frac{\lambda_i}{4}
            \| \eigvec_{iN} - \eigvec_i\|_{\mathbb R^d}^4. \label{eq:ssss}
    \end{align}
    Also, 
       \begin{align}
            \int_{\spectrum(A)\backslash \{\lambda_i\}}  \;d\mu_{A,\eigvec_{iN} - \eigvec_i}(s) 
            &=
            \| \eigvec_{iN} - \eigvec_i\|_{\mathbb R^d}^2 - \left|\langle \eigvec_{iN} - \eigvec_i,
            \eigvec_i\rangle_{\mathbb R^d}\right|^2 \nonumber \\
            &=  \| \eigvec_{iN} - \eigvec_i\|_{\mathbb R^d}^2 - \frac 14 
            \| \eigvec_{iN} - \eigvec_i\|_{\mathbb R^d}^4. \label{eq:tttt}
    \end{align} 
    Combining~\cref{eq:eig7} and~\cref{eq:ssss}, we obtain~\cref{eq:Aestim}.
    Similarly, combining~\cref{eq:eig7hat} and~\cref{eq:tttt}, we obtain~\cref{eq:L2estim}.
    \end{proof}

\begin{lemma}\label{lemma:prop4}
    With the same notation as Proposition~\ref{prop:prop5}, 
    under \cref{as:gap_condition}
    and under the assumption $\langle \eigvec_i,\eigvec_{iN}\rangle_{\mathbb R^d}\geq 0$, there holds
    \[
        \|\eigvec_i - \eigvec_{iN}\|_{\mathbb R^d}\leq (2\hatprefactor_i^{-1})^{1/2}
        \|\Res(\lambda_{iN},\eigvec_{iN})\|_{\dualA}.
    \]
\end{lemma}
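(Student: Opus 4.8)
The plan is to reduce everything to the scalar inequality already furnished by Proposition~\ref{prop:prop5}, namely~\cref{eq:L2estim}, and then exploit the normalization of the eigenvectors together with the sign condition on their overlap. Writing $e := \|\eigvec_i - \eigvec_{iN}\|_{\mathbb R^d}$ and $\rho := \|\Res(\lambda_{iN},\eigvec_{iN})\|_{\dualA}$ for brevity, inequality~\cref{eq:L2estim} reads $e^2 - \frac14 e^4 \le \hatprefactor_i^{-1}\rho^2$, while the target estimate is exactly $e^2 \le 2\hatprefactor_i^{-1}\rho^2$. So the whole task is to upgrade the left-hand side $e^2 - \frac14 e^4$ into a fixed multiple of $e^2$.

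First I would record the elementary identity coming from $\|\eigvec_i\|_{\mathbb R^d} = \|\eigvec_{iN}\|_{\mathbb R^d} = 1$: expanding the square gives $e^2 = 2 - 2\langle \eigvec_i, \eigvec_{iN}\rangle_{\mathbb R^d}$. The hypothesis $\langle \eigvec_i, \eigvec_{iN}\rangle_{\mathbb R^d} \ge 0$ then forces $e^2 \le 2$. This is the sole place where the sign condition is used, and it is what pins the error to the favorable regime. With this a priori bound in hand, the quartic correction is controlled by $\frac14 e^4 = \frac14 e^2 \cdot e^2 \le \frac12 e^2$, so that $e^2 - \frac14 e^4 \ge \frac12 e^2$. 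Chaining this lower bound with~\cref{eq:L2estim} yields $\frac12 e^2 \le \hatprefactor_i^{-1}\rho^2$, and taking square roots produces the claimed estimate.

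The argument is short, and the only point requiring care — rather than a genuine obstacle — is that the quartic term in~\cref{eq:L2estim} must be \emph{absorbed} and not merely discarded. Indeed, without any sign restriction one could a priori have $e^2$ as large as $4$ (nearly antipodal eigenvectors), and in that regime $t \mapsto t - \frac14 t^2$ no longer dominates any positive multiple of $t$, so the reasoning would collapse. The positivity of the overlap $\langle \eigvec_i, \eigvec_{iN}\rangle_{\mathbb R^d}$ is precisely what confines $e^2$ to $[0,2]$, on which $t - \frac14 t^2 \ge \frac12 t$ holds, and this is exactly why the orientation convention was imposed in the statement.
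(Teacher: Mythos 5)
Your proof is correct and follows essentially the same route as the paper: the sign condition together with the normalization gives $\|\eigvec_i-\eigvec_{iN}\|_{\mathbb R^d}^2\le 2$, which lets the quartic term in~\cref{eq:L2estim} be absorbed to yield $\frac12\|\eigvec_i-\eigvec_{iN}\|_{\mathbb R^d}^2\le \hatprefactor_i^{-1}\|\Res(\lambda_{iN},\eigvec_{iN})\|_{\dualA}^2$. Your closing remark on why the overlap condition is needed is a nice addition but matches the paper's argument in substance.
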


\begin{proof} 
    Using the assumption $\langle \eigvec_i,\eigvec_{iN}\rangle_{\mathbb R^d}\geq 0$
    and employing the scaling 
    $\|\eigvec_i\|_{\mathbb R^d}=\|\eigvec_{iN}\|_{\mathbb R^d} = 1$, one
    obtains the inequality
    \[
        \|\eigvec_i - \eigvec_{iN}\|_{\mathbb R^d}^2 = 2 - 2\langle \eigvec_i,\eigvec_{iN}\rangle_{\mathbb
        R^d} \leq 2.
    \]
    Writing
    \begin{align*}
        \|\eigvec_i - \eigvec_{iN}\|_{\mathbb R^d}^2 - &\frac 14\|\eigvec_i - \eigvec_{iN}\|_{\mathbb
        R^d}^4
        =
         \|\eigvec_i - \eigvec_{iN}\|_{\mathbb R^d}^2 - \frac 14\|\eigvec_i - \eigvec_{iN}\|_{\mathbb
        R^d}^2\|\eigvec_i - \eigvec_{iN}\|_{\mathbb R^d}^2 \\
        &\geq   
        \|\eigvec_i - \eigvec_{iN}\|_{\mathbb R^d}^2 - \frac 12\|\eigvec_i - \eigvec_{iN}\|_{\mathbb
        R^d}^2 = 
        \frac 12 \|\eigvec_i - \eigvec_{iN}\|_{\mathbb R^d}^2,
    \end{align*}
    and evoking~\cref{eq:L2estim} yields the result.
\end{proof}

\begin{proposition}[Eigenvalue estimation]
\label{prop:eigenv_prop57}
    With the same notation as Proposition~\ref{prop:prop5}, 
    under \cref{as:gap_condition},
    we have
   \[
     \lambda_{iN} - \lambda_i \le
     \tildeprefactor_i^{-1} \| \Res(\lambda_{iN},\eigvec_{iN}) \|_{\dualA}^2.
   \]
\end{proposition}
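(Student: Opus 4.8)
The plan is to reduce the eigenvalue estimate to the energy-norm bound~\cref{eq:Aestim} already established in Proposition~\ref{prop:prop5}, by rewriting $\lambda_{iN}-\lambda_i$ in terms of the quantity $\|\eigvec_i-\eigvec_{iN}\|_{\normA}^2-\tfrac{\lambda_i}{4}\|\eigvec_i-\eigvec_{iN}\|_{\mathbb R^d}^4$ that appears on its left-hand side. First I would use that both eigenfunctions are $\Ldeux$-normalized and are Rayleigh critical points: testing the Galerkin equation~\cref{eq:pb_eig_Gal} against $\eigvec_{iN}$ yields $\lambda_{iN}=\langle\eigvec_{iN},A\eigvec_{iN}\rangle_{\mathbb R^d}=\|\eigvec_{iN}\|_{\normA}^2$, and likewise $\lambda_i=\|\eigvec_i\|_{\normA}^2$.

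Next I would expand $\|\eigvec_{iN}-\eigvec_i\|_{\normA}^2$ using the self-adjointness of $A$ together with the eigenrelation $A\eigvec_i=\lambda_i\eigvec_i$, which gives $\|\eigvec_{iN}-\eigvec_i\|_{\normA}^2=\lambda_{iN}-2\lambda_i\langle\eigvec_{iN},\eigvec_i\rangle_{\mathbb R^d}+\lambda_i$. Substituting the scaling identity $\langle\eigvec_{iN},\eigvec_i\rangle_{\mathbb R^d}=1-\tfrac12\|\eigvec_i-\eigvec_{iN}\|_{\mathbb R^d}^2$ obtained in~\cref{eq:eig5} then collapses this expression to the key identity
\[
\lambda_{iN}-\lambda_i=\|\eigvec_i-\eigvec_{iN}\|_{\normA}^2-\lambda_i\|\eigvec_i-\eigvec_{iN}\|_{\mathbb R^d}^2.
\]

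Finally I would compare this identity with the left-hand side of~\cref{eq:Aestim}: their difference equals $\lambda_i\|\eigvec_i-\eigvec_{iN}\|_{\mathbb R^d}^2\big(\tfrac14\|\eigvec_i-\eigvec_{iN}\|_{\mathbb R^d}^2-1\big)$, which is nonpositive because $\lambda_i>0$ by coercivity of $A$ (Assumption~\ref{as:posdefA}) and because $\|\eigvec_i-\eigvec_{iN}\|_{\mathbb R^d}^2=2-2\langle\eigvec_i,\eigvec_{iN}\rangle_{\mathbb R^d}\le4$ by Cauchy--Schwarz on the normalized vectors. Hence $\lambda_{iN}-\lambda_i\le\|\eigvec_i-\eigvec_{iN}\|_{\normA}^2-\tfrac{\lambda_i}{4}\|\eigvec_i-\eigvec_{iN}\|_{\mathbb R^d}^4$, and invoking~\cref{eq:Aestim} concludes. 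The only point needing care is this final sign argument: in contrast to Lemma~\ref{lemma:prop4}, no orientation hypothesis $\langle\eigvec_i,\eigvec_{iN}\rangle_{\mathbb R^d}\ge0$ is required, since the crude bound $\|\eigvec_i-\eigvec_{iN}\|_{\mathbb R^d}^2\le4$ already forces the correcting term to have the right sign, which is precisely why the proposition holds under~\cref{as:gap_condition} alone.
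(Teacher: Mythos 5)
Your proof is correct and takes essentially the same route as the paper's: both establish the exact identity $\lambda_{iN}-\lambda_i=\|\eigvec_i-\eigvec_{iN}\|_{\normA}^2-\lambda_i\|\eigvec_i-\eigvec_{iN}\|_{\mathbb R^d}^2$ via the normalizations and \cref{eq:eig5}, then use $\|\eigvec_i-\eigvec_{iN}\|_{\mathbb R^d}^2\le 4$ together with $\lambda_i>0$ to replace the quadratic term by the quartic one, and conclude with \cref{eq:Aestim}. Your closing remark is also accurate: as in the paper, no orientation hypothesis $\langle\eigvec_i,\eigvec_{iN}\rangle_{\mathbb R^d}\ge 0$ is needed, since the crude bound $\|\eigvec_i-\eigvec_{iN}\|_{\mathbb R^d}^2\le 4$ suffices at this step.
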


\begin{proof}
   Evoking equation \cref{eq:eig5}, there holds
    \begin{align*} 
        \|\eigvec_i-\eigvec_{iN}\|_{\normA}^2 = 
        \lambda_i +\lambda_{iN} - 2 \lambda_i \langle \eigvec_i,\eigvec_{iN}\rangle_{\mathbb
        R^d} \nonumber &= \lambda_{iN} - \lambda_i + 2\lambda_i (1 -  \langle
        \eigvec_i,\eigvec_{iN}\rangle_{\mathbb R^d}) \nonumber \\
        & = \lambda_{iN} - \lambda_i + \lambda_i \| \eigvec_i-\eigvec_{iN}\|_{\mathbb R^d}^2.
   \end{align*}
   Hence $
      \lambda_{iN} - \lambda_i =  \|\eigvec_i-\eigvec_{iN}\|_A^2 - \lambda_i \| \eigvec_i-\eigvec_{iN}\|_{\mathbb R^d}^2.$
Since \(\frac{\| \eigvec_i-\eigvec_{iN}\|_{\mathbb R^d}^2}{4} \le 1, \)
there holds
\[
\lambda_{iN} - \lambda_i \le  \|\eigvec-\eigvec_{iN}\|_A^2 - \frac{\lambda_i}{4} \| \eigvec_i-\eigvec_{iN}\|_{\mathbb R^d}^4.
\]
Using~\cref{eq:Aestim} yields the bound.
\end{proof}

\begin{proof}[Proof of Theorem~\ref{thm:a_posteriori2}]
    
    Employing Lemma~\ref{lemma:prop4} within Proposition~\ref{prop:prop5} yields
    \begin{align*}
       \| \eigvec_i - \eigvec_{iN} \|_{\normA}^2
        &\le \tildeprefactor_i^{-1} \| \Res(\lambda_{iN},\eigvec_{iN}) \|_{\dualA}^2 + \frac{\lambda_i}{4}
       \| \eigvec_i - \eigvec_{iN} \|_{\mathbb R^d}^4 \\
        &\le \tildeprefactor_i^{-1} \| \Res(\lambda_{iN},\eigvec_{iN}) \|_{\dualA}^2 + \lambda_i\hatprefactor_i^{-2}  
        \| \Res(\lambda_{iN},\eigvec_{iN}) \|_{\dualA}^4. 
    \end{align*}
    Evoking Lemma~\ref{lemma:lem2} yields~\cref{eq:Aestimthm}.
    For the eigenvalue estimate \cref{eq:eigenestimthm}, by combining Proposition~\ref{prop:eigenv_prop57} and Lemma~\ref{lemma:lem2}, we have 
    \[
        \lambda_{iN} - \lambda_i \leq \tildeprefactor_i^{-1} \| \Res(\lambda_{iN},\eigvec_{iN})\|_{\dualA}^2 \leq C \tildeprefactor_i^{-1}\sum_{k=1}^{\K+1} \| \sqrt{p_k} \Res(\lambda_{iN},\eigvec_{iN})\|_{\dualAk}^2.
    \]
    The lower bound is a simple consequence of the variational principle.
  \end{proof}

 \begin{proof}[Proof of Lemma~\ref{lemma:uplow}]
    Let $v\in \Ldeux(\Omega_k)$.
    Using that the eigenvalues $(\varepsilon_{j}^{(k)})_j$ are increasing, we have 
     \[
         \|v\|_{\dualAk}^2 = \mathcal I_k(v) + \sum_{j=\J_k+1}^\infty
         \frac{1}{\varepsilon_j^{(k)}}\left|\langle
         v,\psi_j^{(k)}\rangle_{\Omega_k}\right|^2 \leq  \mathcal I_k(v) + 
         \frac{1}{\varepsilon_{\J_k}^{(k)}}
        \sum_{j=\J_k+1}^\infty \left|\langle
         v,\psi_j^{(k)}\rangle_{\Omega_k}\right|^2.
     \]
    Using that 
     \[
         \|v\|_{\Omega_k}^2 = \sum_{j=1}^{\J_k} \left|\langle
         v,\psi_j^{(k)}\rangle_{\Omega_k}\right|^2 + 
         \sum_{j=\J_k+1}^\infty \left|\langle
         v,\psi_j^{(k)}\rangle_{\Omega_k}\right|^2,
     \]
     we obtain the upper bound.
     The lower bound is straightforward.
\end{proof}

 \begin{proof}[Proof of Theorem~\ref{thm:practical1}]
     For $k=1,\ldots,K$, let $w_k=\sqrt{p_k}\Res(u_N) \in \Ldeux(\Omega_k)$. 
     Applying \cref{lemma:uplow} to $w_k$  
     yields the upper bound
    \[
        \|w_k\|^2_{\dualAk}
        \leq
        \mathcal I_k(w_k)
        + \frac{1}{\varepsilon^{(k)}_{\J_k}}\left(\|w_k\|_{\Omega_k}^2 -
        \sum_{j=1}^{\J_k} 
        \left|\langle w_k,\psi^{(k)}_j\rangle_{\Omega_k}\right|^2\right).
    \]
    Using the bound in Theorem~\ref{thm:a_posteriori1} finishes the proof.
 \end{proof}
 
 \begin{proof}[Proof of Theorem~\ref{thm:practical2}]
     The proof is similar to the proof of \cref{thm:practical1} using \cref{thm:a_posteriori2} instead of \cref{thm:a_posteriori1}.
 \end{proof}

\section{Conclusion}
\label{sec:concl}

In this work, we have proposed {\it a posteriori} error bounds for source problems and eigenvalue problems, that is suited to linear combinations of atomic orbitals basis sets, which are the standard discretization choice in {\it ab initio} electronic molecular calculations. 
These error bounds exploit the spherical symmetry of the potentials, locally around the nuclei.
Moreover, since these {\it a posteriori} bounds are localized around each nucleus of the considered system, it gives a criterion to dynamically adapt the basis set. 
This is particularly important as there is no {\it a priori} error estimates for the Gaussian basis sets used in practice.
The extension of this contribution to nonlinear models (Hartree--Fock, Kohn--Sham models) in the spirit of~\cite{Bordignon2024-fp}, as well as error bounds for electronic properties, \emph{e.g.} for derivatives of the eigenvalues with respect to parameters appearing in the potential, as in~\cite{Cances2022-mg}, is left for future work.

\section*{Acknowlegments}

The authors are grateful to Susi Lehtola for contributing to the implementation of the code on three-dimensional models. The authors would like to thank Yvon Maday for fruitful discussions and comments.

\bibliographystyle{siamplain}
\bibliography{references}

\end{document}